\newtheorem{theorem}{Theorem}[section]
\newtheorem{lemma}{Lemma}[section]
\newtheorem{corr}{Corollary}[section]
\newcommand{\ld}{\lambda}
\newcommand{\Z}{\mathbb{Z}}
\title{Explicit constants in averages involving the
multiplicative order}
\author{Kim, Sungjin}
\affil[1]{Department of Mathematics \\ University of California, Los Angeles \\ Math Science Building 6617A \\ E-mail:  707107@gmail.com }
\let\authors\AB@authors
\begin{document}
    \maketitle

    \begin{abstract}
    Let $a>1$. Denote by $\ell_a(p)$ the multiplicative order of $a$ modulo $p$. We look for an estimate of sum of $\frac{\ell_a(p)}{p-1}$ over primes $p\leq x$ on average. When we average over $a\leq N$, we observe a statistic of $C\mathrm{Li}(x)$. P. J. Stephens ~\cite[Theorem 1]{S} proved this statistic for $N>\exp(c_1\sqrt{\log x})$ for some positive constant $c_1$. Upon this result, the value of $c_1$ is at least $12e^9$ in ~\cite{S}, we reduce this value of $c_1$ to $3.42$ by a different method. In fact, ~\cite[Theorem 1, 3]{S} hold with $N>\exp(3.42\sqrt{\log x})$, and ~\cite[Theorem 2, 4]{S} hold with $N>\exp(4.8365\sqrt{\log x})$. Also, we improve the range of $y$, from $y\geq \exp((2+\epsilon)\sqrt{\log x\log\log x})$ in ~\cite[Theorem 1]{LP}, to $y > \exp(3.42\sqrt{\log x})$.
        \end{abstract}

\section{Introduction}\footnote{Keywords: Artin, Primitive Root, Average, AMS Subject Classification Code: 11A07}
    We use $p$, $q$ to denote prime numbers, and use $c_i$ to denote an absolute positive constants.
    Let $a\geq 1$ be an integer. Denote by $\ell_a(p)$ the multiplicative order of $a$ modulo prime $p$. Artin's Conjecture on Primitive Roots (AC) states that for non-square non-unit $a$, $a$ is a primitive root modulo $p$ for infinitely many primes $p$. Thus, $\ell_a(p)=p-1$ for infinitely many primes $p$. Assuming the Generalized Riemann Hypothesis (GRH) for Dedekind zeta functions for Kummer extensions, C. Hooley ~\cite{H} showed that $\ell_a(p)=p-1$ for positive proportion of primes $p\leq x$. Thus, we expect that $\ell_a(p)$ is large, and close to $p-1$ for large number of primes $p$. On average, we expect that $\ell_a(p)/(p-1)$ behaves like a constant. P. J. Stephens (see ~\cite[Theorem 1]{S}) showed that if $N>\exp(c_1 \sqrt{\log x})$ then for any positive constant $A$,
    $$
    N^{-1}\sum_{a\leq N}\sum_{p\leq x}\frac{\ell_a(p)}{p-1}=C\textrm{Li}(x)+O\left(\frac{x}{\log^A x}\right),
    $$
    where $C$ is Stephens' constant:
    $$
    C=\prod_p\left(1-\frac p{p^3-1}\right).$$
    Although the value of the positive constant $c_1$ is not explicitly given in ~\cite{S}, but we see that $c_1$ is at least $12e^9$ (see ~\cite[Lemma 7]{S}). The optimal value of $c_1$ along Stephens' method is any positive number greater than $2\sqrt2 e\approx 7.6885$. We replace $7.6885$ by $3.42$ by a different method.
    \begin{theorem}
    If $N>\exp(3.42\sqrt{\log x})$, then for any positive constant $D$,
    \begin{equation}
    N^{-1}\sum_{a\leq N}\sum_{p\leq x}\frac{\ell_a(p)}{p-1}=C \mathrm{Li}(x)+O\left(\frac{x}{\log^D x}\right).
    \end{equation}
    \end{theorem}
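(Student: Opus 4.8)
The plan is to replace the average over $a\le N$ by the average over a complete residue system modulo $p$, for which $\ell_a(p)/(p-1)$ is an explicit function of $a\bmod p$, and then to show that the discrepancy between these two averages is negligible once $N>\exp(3.42\sqrt{\log x})$. Write $j_a(p)=(p-1)/\ell_a(p)$ for the index of $\langle a\rangle$ in $(\Z/p\Z)^{\times}$, so that $\ell_a(p)/(p-1)=1/j_a(p)$, and let $f$ be the multiplicative function determined by $\sum_{d\mid n}f(d)=1/n$, so that $f(q^{k})=(1-q)q^{-k}$ for each prime power. Since $d\mid j_a(p)$ holds exactly when $a$ is a $d$-th power residue, i.e. when $a^{(p-1)/d}\equiv1\pmod p$, I would expand
\[
\frac{\ell_a(p)}{p-1}=\sum_{d\mid p-1}f(d)\,\mathbf 1\bigl[a^{(p-1)/d}\equiv1\ (\mathrm{mod}\ p)\bigr],
\]
and detect the power-residue condition by the $d$ characters of order dividing $d$, via $\mathbf 1[a\text{ is a }d\text{-th power}]=\tfrac1d\sum_{\chi^{d}=\chi_0}\chi(a)$ for $p\nmid a$. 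Substituting and exchanging the order of summation splits the left side of $(1)$ into a main term coming from the principal character $\chi_0$ and an error term coming from the nonprincipal $\chi$.

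For the main term, the contribution of $\chi_0$ has leading part $\sum_{d\ge1}\frac{f(d)}{d}\,\pi(x;d,1)$. I would evaluate this by the Siegel--Walfisz theorem, $\pi(x;d,1)=\mathrm{Li}(x)/\phi(d)+O\!\bigl(x\exp(-c\sqrt{\log x})\bigr)$ valid for $d\le(\log x)^{B}$, together with the trivial tail bound for $d>(\log x)^{B}$, which is admissible because $f(d)/(d\,\phi(d))$ decays like $d^{-2+o(1)}$. A short Euler-product computation then gives
\[
\sum_{d\ge1}\frac{f(d)}{d\,\phi(d)}=\prod_{q}\left(1-\frac{q}{q^{3}-1}\right)=C,
\]
so the main term is $C\,\mathrm{Li}(x)+O(x/\log^{D}x)$. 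Note that this step never needs $d$ larger than a power of $\log x$, so it places no constraint on $N$.

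The entire constraint, and the main obstacle, is the error term
\[
N^{-1}\sum_{p\le x}\ \sum_{d\mid p-1}\frac{f(d)}{d}\sum_{\substack{\chi^{d}=\chi_0\\ \chi\ne\chi_0}}\ \sum_{a\le N}\chi(a).
\]
Because $N=x^{o(1)}$, the inner character sums $\sum_{a\le N}\chi(a)$ admit no useful pointwise bound: Pólya--Vinogradov saves only when $N\gg\sqrt{p}$ and Burgess only when $N\gg p^{1/4}$, both far out of reach. The cancellation must instead be extracted on average over $p$ and over $\chi$, which I would access through the second moment $\sum_{p\le x,\,d\mid p-1}\sum_{\chi^{d}=\chi_0}\bigl|\sum_{a\le N}\chi(a)\bigr|^{2}$, either by invoking the large sieve inequality or, to keep the constants explicit, by opening the square and counting the primes $p\le x$ with $d\mid p-1$ for which $a/b$ is a $d$-th power residue: the diagonal $a=b$ together with the sparse set of pairs for which $a/b$ is a perfect $d$-th power supplies the genuine contribution, while the off-diagonal is governed by the equidistribution of $d$-th power residues. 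After Cauchy--Schwarz against the weights $\frac{|f(d)|}{d}$ and the count of admissible $(p,\chi)$, this produces a bound on the error term that one requires to be $O(x/\log^{D}x)$. Carrying out that averaged character-sum estimate with fully explicit constants, and optimizing the free parameters, is the crux: it is precisely here that replacing Stephens' Pólya--Vinogradov input by the large sieve lowers the admissible constant from $2\sqrt2\,e\approx7.6885$ to $3.42$, so that $(1)$ holds as soon as $N>\exp(3.42\sqrt{\log x})$.
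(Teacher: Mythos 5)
Your main-term analysis is fine, and is in substance a repackaging of what the paper does: your expansion $\frac{\ell_a(p)}{p-1}=\sum_{d\mid p-1}f(d)\,\mathbf 1[a^{(p-1)/d}\equiv 1]$ with character detection is equivalent to the paper's decomposition of $S_3$ via the coefficients $c_w(\chi)$, and your Euler product $\sum_d f(d)/(d\phi(d))=C$ together with Siegel--Walfisz plays the role of the paper's appeal to \cite[Lemma 12]{S}. The genuine gap is in the error term, exactly where you flag ``the crux'' but then propose the wrong tool. A second moment plus Cauchy--Schwarz cannot reach $N=\exp(O(\sqrt{\log x}))$: the large sieve gives $\sum_{p\le x}\sum_{\chi\neq\chi_0}\bigl|\sum_{a\le N}\chi(a)\bigr|^2\ll (x^2+N)N$, and since the total weight of pairs $(p,d)$ with $d\mid p-1$ is of size about $x$, Cauchy--Schwarz leaves you with an error of order $x^{3/2}N^{-1/2}$, which is $O(x/\log^D x)$ only when $N\gg x\log^{2D}x$. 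This is no artifact of the inequality: the diagonal terms $a=b$ alone contribute $\asymp xN$ to your second moment, so no amount of care with the off-diagonal (equidistribution of $d$-th power residues, sparse perfect powers, etc.) can push the admissible range below a power of $x$. That is precisely the range quality of \cite[Theorems 3, 4]{S}, not of the statement you are proving.

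What the paper actually does --- and what your sketch is missing --- is a $2r$-th moment with $r$ growing like $\sqrt{\log x}$. Writing $S_4=\sum_{p\le x}\sum^{*}_{\chi\,(\mathrm{mod}\ p)}\frac{1}{\mathrm{ord}(\chi)}\bigl|\sum_{a\le N}\chi(a)\bigr|$, H\"older gives $S_4\ll x^{1-\frac{1}{2r}}(x^{\frac1r}+N^{\frac12})\bigl(\sum_{a\le N^r}(\tau_r'(a))^2\bigr)^{\frac{1}{2r}}$, where the large sieve is applied not to $\sum_{a\le N}\chi(a)$ but to its $r$-th power, i.e.\ to the length-$N^r$ sequence $\tau_r'(a)$; one then takes $N^{r-1}<x^2\le N^r$, so $r\asymp 2\log x/\log N\asymp\sqrt{\log x}/K$ when $N=\exp(K\sqrt{\log x})$. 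The constant $3.42$ comes from the chain $\sum_{a\le N^r}(\tau_r'(a))^2\le\bigl(\sum_{a\le N}\tau_r(a)\bigr)^r$ together with Bordell\`es' explicit bound $\sum_{a\le N}\tau_r(a)\le\frac{(1+c)^{r-1}}{(r-1)!}N\log^{r-1}N$ for $r-1\le c\log N$: the factorial saving, evaluated by Stirling with the choice $c=2/K^2$, yields $S_4\ll xN\exp\bigl(\sqrt{\log x}\,(f_1(K)+o(1))\bigr)$ with $f_1(K)=-\frac K4+\frac1K\log(1+\frac{2}{K^2})-\frac1K\log 2+\frac1K+\frac{2\log K}{K}$, and $3.42$ is just past the root of $f_1$. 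Your historical framing is also off: Stephens already used the large sieve (with H\"older) for \cite[Theorems 1, 2]{S} --- $2\sqrt2\,e$ is the optimum of \emph{his} method --- and the improvement to $3.42$ comes from the sharper explicit divisor-moment input and parameter optimization; the P\'olya--Vinogradov remark is relevant only to the composite-modulus setting of Theorem 1.5 and \cite{LP}. Without the growing-$r$ H\"older step and the explicit $\tau_r$ estimates, your argument does not establish the stated range.
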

    Similarly, we give an explicit constant $c_1$ in ~\cite[Theorem 2]{S}.
    \begin{theorem}
    If $N>\exp(4.8365 \sqrt{\log x})$, then for any positive constant $E$,
    \begin{equation}
    N^{-1}\sum_{a\leq N}\left(\sum_{p<x}\frac{\ell_a(p)}{p-1}-C\mathrm{Li}(x)\right)^2\ll \frac{x^2}{\log^E x}.
    \end{equation}
    \end{theorem}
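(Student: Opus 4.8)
The plan is to reduce the variance to a first-moment computation of the kind already handled in Theorem 1, but carried out simultaneously over a pair of primes $p,q$; the extra factor in the exponent will come from the fact that the relevant moduli now run up to $x^2$ rather than $x$. Write $S_a(x)=\sum_{p<x}\frac{\ell_a(p)}{p-1}$. Expanding the square and feeding Theorem 1 into the linear term, and using $\mathrm{Li}(x)\ll x/\log x$,
\begin{equation*}
N^{-1}\sum_{a\le N}\bigl(S_a(x)-C\,\mathrm{Li}(x)\bigr)^2
= N^{-1}\sum_{a\le N}S_a(x)^2 - C^2\,\mathrm{Li}(x)^2 + O\Bigl(\frac{x^2}{\log^{D+1}x}\Bigr).
\end{equation*}
Thus it suffices to prove $N^{-1}\sum_{a\le N}S_a(x)^2 = C^2\,\mathrm{Li}(x)^2+O(x^2/\log^E x)$. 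Expanding the square gives $\sum_{p<x}\sum_{q<x}\frac{1}{(p-1)(q-1)}\,N^{-1}\sum_{a\le N}\ell_a(p)\ell_a(q)$, whose diagonal $p=q$ is at most $\sum_{p<x}(p-1)^{-2}(p-1)^2=\pi(x)\ll x^2/\log^E x$ and may be dropped.

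For the off-diagonal part I would linearise the order. With $i_a(p)=(p-1)/\ell_a(p)$ the index of $\langle a\rangle$ in $(\Z/p\Z)^\times$ one has $\ell_a(p)/(p-1)=1/i_a(p)$, and M\"obius inversion of $1/i=\sum_{d\mid i}g(d)$ gives $g(d)=d^{-1}\sum_{e\mid d}\mu(e)e=d^{-1}\prod_{r\mid d}(1-r)$ (product over primes $r\mid d$). Since $d\mid i_a(p)$ precisely when $a$ is a $d$-th power residue mod $p$,
\begin{equation*}
\frac{\ell_a(p)}{p-1}=\sum_{d\mid p-1}g(d)\,\mathbf 1\bigl[a\in R_{d,p}\bigr],
\qquad
\mathbf 1\bigl[a\in R_{d,p}\bigr]=\frac1d\sum_{\substack{\chi \bmod p\\ \chi^{d}=\chi_0}}\chi(a),
\end{equation*}
where $R_{d,p}$ is the set of $d$-th power residues. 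For $p\neq q$ I would multiply the two expansions, average over $a\le N$, and read $\chi_1\chi_2$ as a character mod $pq$ via the Chinese Remainder Theorem. The term $\chi_1=\chi_2=\chi_0$ supplies the density $\tfrac1{d_1d_2}$, and summing it factorises:
\begin{equation*}
\sum_{p<x}\sum_{q<x}\Bigl(\sum_{d_1\mid p-1}\frac{g(d_1)}{d_1}\Bigr)\Bigl(\sum_{d_2\mid q-1}\frac{g(d_2)}{d_2}\Bigr)
=\Bigl(\sum_{p<x}\sum_{d\mid p-1}\frac{g(d)}{d}\Bigr)^{2}.
\end{equation*}
The inner double sum is exactly the main term of Theorem 1, namely $C\,\mathrm{Li}(x)+O(x/\log^{D}x)$, so its square is $C^2\,\mathrm{Li}(x)^2+O(x^2/\log^{D+1}x)$, matching the target.

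It then remains to bound the terms with at least one nonprincipal character. Those with exactly one principal character reduce, after a trivial summation over the free prime (a factor $\ll\mathrm{Li}(x)$), to the incomplete character sums mod $p$ or mod $q$ already estimated in Theorem 1, and so are controlled under the weaker hypothesis $N>\exp(3.42\sqrt{\log x})$. The genuinely new contribution is the one in which $\chi_1\chi_2$ is a nonprincipal character to the composite modulus $pq\le x^2$; here I would apply the large sieve (this being the ``different method'' that replaces Stephens' prime-by-prime P\'olya--Vinogradov bound and is responsible for the small constant) to the family of characters mod $pq$ with $pq<x^2$, together with a truncation of the divisor variables $d_1,d_2$ and a Rankin-type tail bound, since the coefficients $g(d_i)$ grow.

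The hard part is the explicit constant, and this is where $4.8365$ enters. The bound for the doubly nonprincipal term is, structurally, the Theorem 1 error with the modulus range $x$ replaced by $x^2$: the optimisation underlying Theorem 1, which forces $\log N>3.42\sqrt{\log x}$ when the moduli reach $x$, instead forces $\log N>3.42\sqrt{\log x^2}=3.42\sqrt2\,\sqrt{\log x}\approx 4.8365\sqrt{\log x}$ once they reach $x^2$. Making this rigorous requires checking that both the truncation error in $(d_1,d_2)$ and the large-sieve error stay $\ll x^2/\log^E x$ uniformly, and tracking the explicit constant through the optimisation so that the threshold comes out as $\exp(4.8365\sqrt{\log x})$ and not something larger; this bookkeeping, rather than any single inequality, is the main obstacle.
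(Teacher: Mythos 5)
Your proposal is correct and is, at its core, the same argument as the paper's: expand the variance (cross term via Theorem 1.1, diagonal $p=q$ dropped trivially), decompose the off-diagonal sum into characters $\chi_1$ mod $p$ and $\chi_2$ mod $q$, extract $C^2\mathrm{Li}^2(x)$ from the doubly principal part, control the singly nonprincipal part with the mod-$p$ machinery of Theorem 1.1 (the paper's $S_8$), and hit the doubly nonprincipal part --- where $\chi_1\chi_2$ is a primitive character mod $pq\le x^2$ --- with H\"older plus the large sieve; this is exactly the paper's $S_9\ll S_{10}\log^2x\log\log x/N$ together with Lemmas 2.3 and 2.5, where one takes $N^{r-1}<x^4\le N^r$, i.e.\ $r\approx 4\log x/\log N$ instead of $2\log x/\log N$. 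Your diagnosis of where $4.8365$ comes from is precisely what happens in the paper: doubling the modulus range replaces $\log x$ by $2\log x$ in the optimization, and indeed one checks $f_2(\sqrt2\,K)=\sqrt2\,f_1(K)$, so the root $4.83647\cdots$ of $f_2$ is $\sqrt2$ times the root $3.41990\cdots$ of $f_1$, matching your heuristic $3.42\sqrt2\approx 4.8365$. The only genuine difference is the linearization: you expand $\ell_a(p)/(p-1)=1/i_a(p)=\sum_{d\mid i_a(p)}g(d)$ via power-residue indicators, while the paper uses Stephens' coefficients $c_w(\chi)$ attached to the exact-order condition $\ell_a(p)=(p-1)/w$; these are M\"obius inverses of one another and yield the same character families with the same weights. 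One small correction: no Rankin-type truncation of $d_1,d_2$ is needed, since $|g(d)|=\phi(\mathrm{rad}(d))/d\le 1$ and the total weight attached to a character $\chi$ of order $e$ is $\sum_{e\mid d\mid p-1}|g(d)|/d\ll (\log\log x)/e$ --- exactly the $1/\mathrm{ord}(\chi)$ weights (with $\tau(p-1)$ and $\log\log x$ factors) that the paper absorbs into $\exp(-c\sqrt{\log x})$; your remark that the coefficients $g(d_i)$ grow is false, though harmless to the argument.
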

    Subsequently, we obtain an improvement of average results on $$P_a(x) = \{p\leq x | a \textrm{ is a primitive root modulo $p$.}\}$$ (see ~\cite[Theorem 1, 2]{S0}):

    If $N>\exp(3.42 \sqrt{\log x})$, then for any positive constant $D$,
    \begin{equation}
    N^{-1}\sum_{a\leq N}P_a(x) = A\pi(x) + O\left(\frac{x}{\log^D x}\right)
    \end{equation}
    where $A=\prod_p \left(1-\frac{1}{p(p-1)}\right)$ is Artin's constant. Also, the normal order result:

    If $N>\exp(4.8365 \sqrt{\log x})$, then for any positive constant $E$,
    \begin{equation}
    N^{-1}\sum_{a\leq N}\left(P_a(x) - A\pi(x) \right)^2 \ll \frac{x^2}{\log^E x}.
    \end{equation}

    Stephens also proved that the average number of prime divisors of $a^n-b$ also asymptotic to $C\textrm{Li}(x)$ in ~\cite[Theorem 3]{S}, and proved normal order result in ~\cite[Theorem 4]{S}. The number $N$ is rather large compared to those of Theorem 1, 2. ($N>x(\log x)^{c_2}$ in ~\cite[Theorem 3]{S}, and $N>x^2(\log x)^{c_2}$ in ~\cite[Theorem 4]{S} respectively.) He mentioned that these could probably be improved by using the large sieve inequality as in Theorem 1, 2. However, he did not carry out the improvement in ~\cite{S}. Here, we state the improvement and prove them.
    \begin{theorem}
    If $N>\exp(3.42\sqrt{\log x})$, then for any positive constant $D$,
    \begin{equation}
    N^{-2}\sum_{a\leq N}\sum_{b\leq N}\sum_{\substack{{p\leq x}\\{p|a^n-b}\\{\textrm{for some $n$}}}}1 =C\mathrm{Li}(x)+O\left(\frac{x}{\log^D x}\right).
    \end{equation}
    \end{theorem}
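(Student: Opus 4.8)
The plan is to reduce the double average to the single average of Theorem 1.1 for the main term, and to dispose of the extra averaging over $b$ by the large sieve. First I would interchange the order of summation and sum over $b$ for fixed $a$ and $p$. If $p\nmid a$, then $p\mid a^n-b$ for some $n$ exactly when $b$ lies in the cyclic subgroup $\langle a\rangle\subseteq(\Z/p\Z)^\times$, which has order $\ell_a(p)$; hence the admissible residues $b$ form $\ell_a(p)$ classes modulo $p$, and
\[
\#\{b\le N:\ p\mid a^n-b\text{ for some }n\}=\frac{N}{p}\,\ell_a(p)+\Delta_{a,p},
\]
where $\Delta_{a,p}$ is an error I will control below. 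Summing the main piece over $a$ and $p$ gives $N^{-1}\sum_{a\le N}\sum_{p\le x}\ell_a(p)/p$; since $\ell_a(p)/p=\ell_a(p)/(p-1)+O(1/p)$ uniformly and $\sum_{p\le x}1/p\ll\log\log x$, this equals $N^{-1}\sum_{a\le N}\sum_{p\le x}\ell_a(p)/(p-1)+O(\log\log x)$, which by Theorem 1.1 is $C\,\mathrm{Li}(x)+O(x/\log^D x)$ in the very same range $N>\exp(3.42\sqrt{\log x})$.

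It remains to show $N^{-2}\sum_{a\le N}\sum_{p\le x}\Delta_{a,p}=O(x/\log^D x)$. Writing the subgroup indicator through the characters trivial on $\langle a\rangle$,
\[
\mathbf 1[b\in\langle a\rangle]=\frac{\ell_a(p)}{p-1}\sum_{\psi(a)=1}\psi(b),
\]
the principal character $\psi_0$ reproduces the main term $\tfrac{N}{p}\ell_a(p)$ together with a rounding error contributing $O(x/(N\log x))=o(x/\log^D x)$ once $N>\exp(3.42\sqrt{\log x})$. The substantive part is the nonprincipal contribution
\[
\mathcal E=N^{-2}\sum_{p\le x}\sum_{\psi\ne\psi_0}S_\psi\,A_\psi,\qquad S_\psi=\sum_{b\le N}\psi(b),\quad A_\psi=\sum_{\substack{a\le N\\ \psi(a)=1}}\frac{\ell_a(p)}{p-1}.
\]

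To create cancellation I would also extract the mean in the $a$-variable, replacing $\#\{a\le N:a\equiv\alpha\}$ by $N/p+r_\alpha$ with $|r_\alpha|\le1$ and $\sum_\alpha r_\alpha=0$. The mean parts of $A_\psi$ combine with the $\psi_0$-computation into main terms and into one-dimensional discrepancies of $\ell_a(p)/(p-1)$, which are precisely the Theorem 1.1 error and are therefore $O(x/\log^D x)$. What survives is a genuinely bilinear form $\sum_{\psi\ne\psi_0}S_\psi\widetilde A_\psi$ in which both $S_\psi$ (the fluctuation in $b$) and $\widetilde A_\psi=\sum_\alpha (\ell_\alpha(p)/(p-1))\,\mathbf 1[\psi(\alpha)=1]\,r_\alpha$ (a character-twisted discrepancy in $a$) exhibit cancellation. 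Expanding $\mathbf 1[\psi(\alpha)=1]=\tfrac1{e}\sum_{j}\psi^{j}(\alpha)$ for $\psi$ of order $e$, and using the Fourier coefficients of the weight $\ell_a(p)/(p-1)=1/\gcd(\mathrm{ind}\,a,p-1)$—which are Ramanujan sums and so are concentrated on characters of small order—I would apply Cauchy–Schwarz and the large sieve inequality in each of the two variables, exactly as in Theorem 1.1. The decay of these Fourier coefficients and the same cutoff optimization over the divisors of $p-1$ then reproduce the threshold $N>\exp(3.42\sqrt{\log x})$.

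The main obstacle is precisely this bilinear estimate. A single application of the large sieve with modulus range $Q=x$ only yields square-root cancellation on average and forces $N$ to exceed $x$—exactly the obstruction that pinned Stephens to $N>x(\log x)^{c_2}$ in his Theorem 3. The improvement to $\exp(3.42\sqrt{\log x})$ hinges on extracting the mean in both $a$ and $b$ so that the remaining form is doubly cancelling, and on exploiting the rapid decay of the Fourier coefficients of the order weight to localize the character sums to small order before invoking the large sieve; verifying that this localization feeds into the identical optimization as Theorem 1.1, so that the constant $3.42$ is preserved, is the crux of the argument.
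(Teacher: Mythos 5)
Your reduction of the main term is correct and even tidier than the paper's: the count of $b\le N$ in $\langle a\rangle$ modulo $p$ does give $\frac Np\ell_a(p)+O(1)$ from the principal character, and summing the main piece and invoking Theorem 1.1 legitimately produces $C\,\mathrm{Li}(x)+O(x/\log^D x)$ in the stated range. But there is a genuine gap exactly where you flag it: the nonprincipal contribution $\mathcal E$ is never actually bounded, and the route you sketch for it does not go through as stated. The bound you would want, $A_\psi\ll N/\mathrm{ord}(\psi)$, is not free: counting $a\le N$ with $\psi(a)=1$ means counting $a$ in $(p-1)/e$ residue classes modulo $p$, and since in the relevant range $N=\exp(3.42\sqrt{\log x})$ is vastly smaller than $p$ (which runs up to $x$), the per-class rounding error $O(p/e)$ swamps the main term $N/e$; feeding $A_\psi\ll p/\mathrm{ord}(\psi)$ into $\mathcal E$ gives at best $N^{-2}xS_4\ll (x^2/N)\exp(-c\sqrt{\log x})$, which is far larger than $x$. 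To do better you must expand $\mathbf 1[\psi(a)=1]=\frac1e\sum_j\psi^j(a)$ together with the weight $\ell_a(p)/(p-1)$ in characters, at which point you face the double character sum whose treatment you explicitly defer ("the crux of the argument"); the proposed "doubly cancelling" bilinear estimate with Ramanujan-sum localization is speculative, and nothing in the paper's toolkit (Lemmas 2.1--2.5) directly supplies it.

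The paper shows that no bilinear input is needed, because the two averages decouple if you sum over $a$ first. Writing the condition $p\mid a^n-b$ as $\ell_b(p)\mid\ell_a(p)$ and decomposing over $w\mid p-1$ with $\ell_a(p)=(p-1)/w$ and $\ell_b(p)\mid(p-1)/w$, the inner $a$-count is expanded via the coefficients $c_w(\chi)$ exactly as in Theorem 1.1, and in the error term the $b$-sum is bounded trivially (costing only a factor $N$), leaving an $S_4$-type sum with an extra divisor weight $\tau_2(p-1)$; this is handled by splitting according to whether $\tau_2(p-1)<\exp(c_6\sqrt{\log x})$, using Lemma 2.4 on one piece and the elementary bound $\sum_{n\le x}\tau_2(n)^3\ll x\log^7x$ on the other. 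Crucially, the surviving main term carries the $b$-dependence only through the divisibility condition $\ell_b(p)\mid(p-1)/w$, so the identical single-variable character machinery is applied a second time in $b$ (now with $\tau_3(p-1)$), and the resulting double divisor sum collapses via $\sum_{d\mid n}\phi(d)=n$ to $\sum_{p\le x}\sum_{w\mid p-1}\phi\left(\frac{p-1}w\right)/(w(p-1))$, finished by Stephens' Lemma 12. Each pass uses only the single-modulus estimate of Lemma 2.4, which is precisely why Theorem 1.3 keeps the threshold $3.42$ of Theorem 1.1 rather than the $4.8365$ required for the genuinely two-modulus sums $S_{10}$ in Theorems 1.2 and 1.4; your framing, by summing over $b$ first, entangles the set of admissible characters with $a$ and manufactures a bilinear problem the paper never has to solve.
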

    \begin{theorem}
    If $N>\exp(4.8365 \sqrt{\log x})$, then for any positive constant $E$,
    \begin{equation}
    N^{-2}\sum_{a\leq N}\sum_{b\leq N}\left(\sum_{\substack{{p\leq x}\\{p|a^n-b}\\{\textrm{for some $n$}}}}1-C\mathrm{Li}(x)\right)^2\ll \frac{x^2}{\log^E x}.
    \end{equation}
    \end{theorem}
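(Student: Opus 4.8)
The plan is to reduce the variance of the $a^n-b$ counting function to the variance of the order sum $g(a):=\sum_{p\le x}\ell_a(p)/(p-1)$, for which the second theorem above already furnishes the required bound, and then to estimate the error committed in this reduction by the large sieve. For $p\nmid ab$ the condition that $p\mid a^n-b$ for some $n$ is equivalent to $b\bmod p$ lying in the cyclic group $\langle a\rangle$ generated by $a$ in $(\Z/p\Z)^{\times}$, while the primes dividing $ab$ number $O(\log N)$ and contribute an amount negligible after squaring and averaging. Writing $f(a,b):=\sum_{p\le x}\mathbf{1}[\,b\in\langle a\rangle\bmod p\,]$, I would expand
\[
N^{-2}\sum_{a\le N}\sum_{b\le N}\bigl(f(a,b)-C\,\mathrm{Li}(x)\bigr)^2=N^{-2}\sum_{a,b}f(a,b)^2-2C\,\mathrm{Li}(x)\,N^{-2}\sum_{a,b}f(a,b)+\bigl(C\,\mathrm{Li}(x)\bigr)^2 .
\]
The preceding theorem, which is the first moment for this very counting function, gives $N^{-2}\sum_{a,b}f(a,b)=C\,\mathrm{Li}(x)+O(x/\log^{D}x)$, so the cross term equals $-2\bigl(C\,\mathrm{Li}(x)\bigr)^2+O(x^2/\log^{D+1}x)$ and the whole problem reduces to showing $N^{-2}\sum_{a,b}f(a,b)^2=\bigl(C\,\mathrm{Li}(x)\bigr)^2+O(x^2/\log^{E}x)$.

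Opening the square gives $\sum_{p,q\le x}\sum_{a,b}\mathbf{1}[\,b\in\langle a\rangle\bmod p\,]\,\mathbf{1}[\,b\in\langle a\rangle\bmod q\,]$. On the diagonal $p=q$ the indicator is idempotent and $\sum_{b\le N}\mathbf{1}[\,b\in\langle a\rangle\bmod p\,]=N\ell_a(p)/(p-1)+O(1)$, so the diagonal contributes $N^{-1}\sum_a g(a)+O(\pi(x)/N)\ll x/\log x$, which is absorbed into the error term. For $p\ne q$ the Chinese Remainder Theorem shows that over a complete residue system the $b$-sum has main term $N\ell_a(p)\ell_a(q)/(pq)$; collecting these main terms over $p\ne q$, and replacing $1/(pq)$ by $1/((p-1)(q-1))$ at a cost of $O(\pi(x)\log\log x)$, reproduces $N^{-1}\sum_a g(a)^2$. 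By the average order estimate and the order-variance bound of the two theorems above, $N^{-1}\sum_a g(a)^2=\bigl(C\,\mathrm{Li}(x)\bigr)^2+O(x^2/\log^{E}x)$, which supplies exactly the required main term.

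The crux is therefore the off-diagonal \emph{error}: the deviation of the $b$-sum from its Chinese Remainder main term, summed over $p\ne q$ and over $a\le N$. Orthogonality yields $\mathbf{1}[\,b\in\langle a\rangle\bmod p\,]-\ell_a(p)/(p-1)=\tfrac{\ell_a(p)}{p-1}\sum_{\chi\bmod p,\ \chi(a)=1,\ \chi\ne\chi_0}\chi(b)$, so this error is a sum over $p\ne q$, over characters $\chi\bmod p$ and $\psi\bmod q$ that are not both principal and satisfy $\chi(a)=\psi(a)=1$, weighted by $\ell_a(p)\ell_a(q)/((p-1)(q-1))$, of the incomplete character sum $\sum_{b\le N}\chi(b)\psi(b)$. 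For $\chi,\psi$ both nonprincipal this inner sum is $\ll\sqrt{pq}\log x$ by P\'olya--Vinogradov, but inserting this bound and counting the admissible characters trivially gives a saving strong enough only for $N$ a fixed power of $x$, which is essentially Stephens' range in ~\cite[Theorem 4]{S}. The genuine gain must come from cancellation in the sum over $a$: I would expand the order condition over the divisors $d\mid p-1$ and express the subgroup-membership indicators through characters of order dividing $d$, turning the weighted constraint on $a$ into honest character sums $\sum_{a\le N}\chi(a)$ to which the large sieve inequality applies in the $a$-aspect, where the modulus is at most $x$. This is the main obstacle and the heart of the matter: the order weights $\ell_a(p),\ell_a(q)$ must be combined with the conductor-restricted characters so that the large sieve, not the trivial estimate, governs the average over $a$, and the numerical constants must be tracked through this step. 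Carried out with the same care as in the order-variance theorem, this bounds the total off-diagonal error by $O(x^2/\log^{E}x)$ precisely when $N>\exp(4.8365\sqrt{\log x})$; with the diagonal estimate and the reduction above, the proof is complete.
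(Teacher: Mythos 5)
Your reduction and main-term bookkeeping are sound: translating $p\mid a^n-b$ into $b\in\langle a\rangle \bmod p$ (equivalently $\ell_b(p)\mid \ell_a(p)$), disposing of the cross term by Theorem 1.3, and recovering $C^2\mathrm{Li}^2(x)$ from $N^{-1}\sum_{a\le N}g(a)^2$ with $g(a)=\sum_{p\le x}\ell_a(p)/(p-1)$ via Theorems 1.1 and 1.2 is a legitimate, slightly tidier variant of the paper, which instead expands everything at once in four characters with coefficients $c_w(\chi_1)c_{wt}(\chi_2)c_u(\chi_3)c_{us}(\chi_4)$ and evaluates the all-principal contribution by Stephens' Lemma 12. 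But the proof is not complete: the step you yourself call ``the main obstacle and the heart of the matter''---bounding the off-diagonal error after decoupling the $a$-dependence of the weights $\ell_a(p)\ell_a(q)$ and the constraints $\chi(a)=\psi(a)=1$---is asserted rather than carried out. The sentence ``carried out with the same care as in the order-variance theorem, this bounds the total off-diagonal error by $O(x^2/\log^E x)$'' is precisely the conclusion that has to be proved, and it constitutes essentially the whole content of the paper's proof of this theorem.

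Moreover, the one quantitative claim you make about that missing step is wrong, in a way that matters for the constant. Once the order weights and subgroup constraints on $a$ are expanded in characters (the paper's $c_w(\chi)$ device), the sum the large sieve must handle is $\sum_{a\le N}\chi_1\chi_3(a)$ with $\chi_1\chi_3$ a character modulo $pq\le x^2$---not ``modulus at most $x$'' as you assert. It is exactly this modulus $x^2$ that forces the choice $N^{r-1}<x^4\le N^r$, i.e.\ $r-1<\frac{4\log x}{\log N}$, in the H\"{o}lder/large-sieve estimate for $S_{10}$ (Lemma 2.5), and hence the threshold $4.8365$ determined by $f_2(K)<0$; were the moduli bounded by $x$, one would get the $S_4$ bound and the constant $3.42$ of Theorem 1.1, contradicting the very threshold you state. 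Finally, ``the same care as in the order-variance theorem'' does not suffice here: the fourfold expansion produces the divisor factor $\tau_3(p-1)\tau_3(q-1)\tau_2(p-1)\tau_2(q-1)$ from counting characters of given order, which cannot be absorbed into powers of $\log x$; the paper must split the sum over $p,q$ according to whether this product exceeds $\exp(c_8\sqrt{\log x})$, bounding the exceptional part by the elementary moment $\sum_{p\le x}\tau_3(p-1)^2\tau_2(p-1)^3\ll x(\log x)^{O(1)}$---a device needed beyond the Theorem 1.2 template and absent from your sketch. Until these points are executed, the claimed range $N>\exp(4.8365\sqrt{\log x})$ is unsupported.
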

    In ~\cite{C}, Carmichael's lambda function $\ld(n)$ is defined by the exponent of the group $(\Z/n\Z)^{*}$. We say that $a$ is a $\ld$-primitive root modulo $n$ if the order $\ell_a(n)$ of $a$ modulo $n$ is exactly $\ld(n)$. Following the definitions and notations in ~\cite{LP},
    $$R(n)=\# \{a\in (\Z/n\Z)^{*}  :  a \textrm{ is a $\ld$-primitive root modulo $n$.} \},$$
    $$N_a(x)=\# \{ n\leq x : a \textrm{ is a $\ld$-primitive root modulo $n$.}\}. $$
    S. Li ~\cite{L} proved that for $y\geq \exp((\log x)^{3/4})$,
    \begin{equation}
    \frac1y \sum_{a\leq y} N_a(x) \sim \sum_{n\leq x} \frac{R(n)}n.
    \end{equation}
    This was further improved by S. Li and C. Pomerance ~\cite{LP}, where they obtained (7) with range of $y$:
    \begin{equation}
    y\geq \exp((2+\epsilon)\sqrt{\log x \log\log x}).
    \end{equation}
    We prove (7) with a wider range of $y$:
    \begin{theorem}
    If $y>\exp(3.42\sqrt{\log x})$, then there exists a positive constant $c_2$ such that
    \begin{equation}
    \frac1y \sum_{a\leq y}N_a(x) = \sum_{n\leq x} \frac{R(n)}n + O\left(x\exp(-c_2\sqrt{\log x})\right).
    \end{equation}
    \end{theorem}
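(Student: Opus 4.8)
The plan is to expand $N_a(x)$ by definition and interchange the order of summation, writing
$$\frac1y\sum_{a\leq y}N_a(x)=\frac1y\sum_{n\leq x}\#\{a\leq y:a\textrm{ is a }\ld\textrm{-primitive root modulo }n\}.$$
The inner count depends only on $a\bmod n$, and a residue is counted exactly when it is a unit of maximal order $\ld(n)$ in $G=(\Z/n\Z)^{*}$. To detect this I would use the inclusion--exclusion identity
$$\mathbf{1}[\textrm{ord}(a)=\ld(n)]=\sum_{d\mid\textrm{rad}(\ld(n))}\mu(d)\,\mathbf{1}\!\left[a^{\ld(n)/d}=1\right],$$
valid because $a^{\ld(n)/\ell}=1$ holds precisely when the $\ell$-part of the order of $a$ fails to be maximal. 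Each indicator $\mathbf{1}[a^{\ld(n)/d}=1]$ detects membership in the subgroup $G[\ld(n)/d]=\{b\in G:b^{\ld(n)/d}=1\}$, and by orthogonality it equals $|G^{\ld(n)/d}|^{-1}\sum_{\chi}\chi(a)$, where $G^{m}$ denotes the image of the $m$-th power map on $G$ (so that $|G^{m}|$ is the number of characters of $G$ trivial on $G[m]$) and the sum runs over exactly those characters.

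The principal character produces the main term: $\sum_{a\leq y,\,(a,n)=1}1=\frac{\phi(n)}{n}y+O(2^{\om(n)})$, and since
$$\sum_{d\mid\textrm{rad}(\ld(n))}\frac{\mu(d)}{|G^{\ld(n)/d}|}=\frac{R(n)}{\phi(n)},$$
the principal part contributes exactly $\sum_{n\leq x}R(n)/n$ after dividing by $y$; the accompanying error is controlled once one has an average bound for $2^{\om(\ld(n))}$ and for the subgroup indices, which is part of the routine bookkeeping. Everything then reduces to controlling the non-principal contribution
$$\frac1y\sum_{n\leq x}\sum_{d\mid\textrm{rad}(\ld(n))}\frac{\mu(d)}{|G^{\ld(n)/d}|}\sum_{\substack{\chi\ne\chi_0\\ \chi\textrm{ trivial on }G[\ld(n)/d]}}\ \sum_{a\leq y}\chi(a).$$
Here I would replace each $\chi$ modulo $n$ by the primitive character $\chi^{*}$ of conductor $f\mid n$ that induces it, at the cost of a divisor factor from reinstating $(a,n)=1$, and then group the resulting primitive characters by conductor $f\leq x$ and by multiplicative order.

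The heart of the matter is then a large sieve estimate for $\sum_{a\leq y}\chi^{*}(a)$ over primitive characters of conductor up to $x$, carried out as in the proof of Theorem 1.1; this is where the threshold $\exp(3.42\sqrt{\log x})$ enters, and it is the main obstacle. The difficulty is that $y$ is far smaller than $x$, so the term $Q^2$ in the large sieve inequality (with $Q$ as large as $x$) dwarfs $y$, and a direct application loses too much. I expect to circumvent this exactly as in the prime-modulus case: rather than applying the large sieve to all characters simultaneously, I would bound the number of admissible characters of each order, exploit both the cancellation in $\sum_{a\leq y}\chi(a)$ and the smallness of $|G^{\ld(n)/d}|^{-1}$ for large $d$, and balance the residual quantity---which carries a factor of the shape $\exp(c\log x/\log y)$ coming from the divisor sums over $n$ and over $d\mid\textrm{rad}(\ld(n))$---against the savings. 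Taking $\log y>3.42\sqrt{\log x}$ renders this factor negligible and yields the stated error $O(x\exp(-c_2\sqrt{\log x}))$. The remaining bookkeeping peculiar to this theorem---estimating $\textrm{rad}(\ld(n))$, the indices $|G^{\ld(n)/d}|$, and handling the non-cyclic structure of $(\Z/n\Z)^{*}$---is additional but routine once the large sieve input underlying Theorem 1.1 is available.
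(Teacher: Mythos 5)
Your algebraic setup is correct and in fact reproduces what the paper imports from \cite{L} and \cite{LP}: the inclusion--exclusion over $d\mid\mathrm{rad}(\lambda(n))$, the orthogonality detection via characters trivial on $G[\lambda(n)/d]$ (these are exactly the ``elementary characters'' of \cite{LP}), and the identity $\sum_{d\mid\mathrm{rad}(\lambda(n))}\mu(d)/|G^{\lambda(n)/d}|=R(n)/\phi(n)$ all check out, and they recover the identity $\sum_{a\leq y}N_a(x)=y\sum_{n\leq x}R(n)/n+B(x,y)+O(x\log x)$ that the paper simply cites. Your plan to pass to primitive characters grouped by conductor is also the paper's route (its sums $F(k,z)$, $T(w,z)$, $S(w,z)$). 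But two things you wave through are where the actual content lies. First, removing the coprimality condition $(a,n)=1$ is not merely ``a divisor factor'': it dilates the summation length, producing $\sum_{d\leq x}|\mu(d)|S_d$ with $S_d\leq S(x/d,y/d)$, and for $d$ so large that $y/d$ falls below the character-sum regime the H\"older/large-sieve bound is vacuous; one must invoke the separate estimate $S_d\ll (xy/d^2)\exp(3\sqrt{\log x/\log\log x})$, whose $d^{-2}$ decay, combined with the forced lower bound on $d$ in that regime, is what rescues the error term. This is in \cite{LP} and arguably routine, but it is absent from your sketch.

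The genuine gap is your claim that the threshold $3.42$ enters ``exactly as in the prime-modulus case.'' It does not, and a single-pass transplant of the Theorem 1.1 optimization would fail to reach $3.42$. The composite-modulus argument must bound $S(w,z)=T(w,z)+w\int_1^w u^{-2}T(u,z)\,du$, so one needs $T(u,z)$ uniformly for all $1\leq u\leq w$; for small $u$ only P\'olya--Vinogradov-type bounds $T(u,z)\ll u^{3/2}\exp(3\sqrt{\log u/\log\log u})$ are available, and in the large-sieve regime the saving in $z$ is only a fractional power ($z^{13/16}$ unconditionally, via $w^{1/(2r)}>z^{3/16}$ when $w>z^{3/2}$), not the full cancellation one gets modulo a single prime. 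Balancing these two regimes as you describe closes only at the root of $-\frac{3}{16}K+f_1(K)+\frac{K}{4}=0$, numerically about $4.18$ --- this is precisely the paper's intermediate Theorem 2.1, valid for $y>\exp(4.2\sqrt{\log x})$. To descend to $3.42$ the paper needs a second, structurally different pass: since large $y$ is already covered by the weaker theorem, one may assume $y\leq\exp(16\sqrt{\log x})$, and this two-sided restriction permits the sharper estimate of Lemma 2.9, $T(w,z)\ll wz^{3/4}\exp(\sqrt{\log w}(f_1(3.419906)+\frac{3.419906}{4}+\epsilon))$, at the cost of an extra term $w\log w\cdot z^{7/8}$ in $S(w,z)$ that must be absorbed separately; only then does the exponent balance close at the root of $f_1(K)=0\approx 3.4199$. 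Nothing in your proposal anticipates this two-stage structure or the range-restricted lemma, so as written your argument proves the statement only with a constant near $4.2$ in place of $3.42$.
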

    \section{Proof of Theorems}
    \subsection{Proof of Theorem 1.1 and 1.2}
    We begin with the following elementary lemma (see ~\cite{B}):
    \begin{lemma}
    Let $r\geq 1$ and define $\tau_r(a)$ to be the number of ways to write $a$ as an ordered product of $r$ positive integers. If $N\geq 1$, then we have
    \begin{equation}
    \sum_{a\leq N}\tau_r(a) \leq \frac{1}{(r-1)!} N (\log N + r-1)^{r-1}.\end{equation}
    \end{lemma}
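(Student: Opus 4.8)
The plan is to prove the bound by induction on $r$, exploiting the Dirichlet convolution identity $\tau_r=\tau_{r-1}*\mathbf 1$, that is,
\[
\tau_r(a)=\sum_{d\mid a}\tau_{r-1}(a/d),
\]
which is immediate from the combinatorial definition: an ordered factorization $a=d_1\cdots d_r$ is determined by choosing the last factor $d_r=d$ (a divisor of $a$) and then factoring $a/d$ into $r-1$ ordered parts. For the base case $r=1$ we have $\tau_1\equiv 1$, so $\sum_{a\le N}\tau_1(a)=\lfloor N\rfloor\le N$, which matches the right-hand side $\tfrac{1}{0!}N(\log N)^0=N$ under the convention $0^0=1$.

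For the inductive step I would interchange the order of summation to write
\[
\sum_{a\le N}\tau_r(a)=\sum_{d\le N}\ \sum_{m\le N/d}\tau_{r-1}(m),
\]
and then apply the induction hypothesis to the inner sum, obtaining
\[
\sum_{a\le N}\tau_r(a)\le \frac{N}{(r-2)!}\sum_{d\le N}\frac1d\Bigl(\log\tfrac Nd+r-2\Bigr)^{r-2}.
\]
The heart of the matter is to bound the remaining sum by an integral. Setting $f(t)=t^{-1}(\log(N/t)+r-2)^{r-2}$, both factors $t^{-1}$ and $(\log(N/t)+r-2)^{r-2}$ are nonnegative and nonincreasing on $(0,N]$, so their product $f$ is nonincreasing, and hence $\sum_{d\le N}f(d)\le f(1)+\int_1^N f(t)\,dt$. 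Here $f(1)=(\log N+r-2)^{r-2}$, while the substitution $s=\log(N/t)$ evaluates the integral to $\frac{1}{r-1}\bigl((\log N+r-2)^{r-1}-(r-2)^{r-1}\bigr)$.

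It then remains to verify a purely algebraic inequality that closes the induction. Writing $L=\log N$ and $k=r-2\ge 0$, the desired bound reduces to
\[
(k+1)(L+k)^k+(L+k)^{k+1}-k^{k+1}\le (L+k+1)^{k+1}.
\]
I expect this to be the one genuinely delicate point: the integral comparison naturally produces an exponent $(\,\cdot\,)^{r-1}$ built on $\log N+r-2$ rather than on $\log N+r-1$, and one must check that the boundary term $f(1)$ exactly absorbs the discrepancy. This follows by retaining only the first two terms of the binomial expansion $((L+k)+1)^{k+1}=(L+k)^{k+1}+(k+1)(L+k)^k+\cdots$, all of whose terms are nonnegative since $L+k\ge 0$, and then discarding the harmless contribution $-k^{k+1}\le 0$. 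Dividing through by $(k+1)=r-1$ and multiplying by $N/(r-2)!$ (noting $(r-2)!\,(r-1)=(r-1)!$) recovers the claimed bound for $\tau_r$, completing the induction.
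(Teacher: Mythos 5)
Your proof is correct, and it is essentially the paper's own argument: the paper dispatches the lemma with ``The proof is by induction'' (deferring to the cited reference [B]), and that induction is exactly the one you carry out, via the convolution identity $\tau_r=\tau_{r-1}*\mathbf{1}$, the monotone comparison $\sum_{d\le N}f(d)\le f(1)+\int_1^N f(t)\,dt$, and the binomial inequality $(k+1)(L+k)^k+(L+k)^{k+1}\le (L+k+1)^{k+1}$ that closes the induction. All steps, including the base case with the $0^0=1$ convention and the sign check that lets you discard $-k^{k+1}$, are handled correctly.
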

    The proof is by induction.
    \begin{corr}
    Let $c>0$. If $N\geq 1$ and $r-1\leq c\log N$, then
    \begin{equation}
    \sum_{a\leq N}\tau_r(a) \leq \frac{(1+c)^{r-1}}{(r-1)!}N \log^{r-1} N.
    \end{equation}
    \end{corr}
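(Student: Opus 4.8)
The plan is to derive the Corollary directly from Lemma 2.1 by controlling the factor $(\log N + r-1)^{r-1}$ under the extra hypothesis $r-1 \le c\log N$. First I would note that since $N \ge 1$ we have $\log N \ge 0$, so the hypothesis gives
$$
\log N + (r-1) \le \log N + c\log N = (1+c)\log N,
$$
and both sides here are nonnegative. Raising to the $(r-1)$-th power, which is monotone nondecreasing on the nonnegative reals, yields $(\log N + r-1)^{r-1} \le (1+c)^{r-1}(\log N)^{r-1}$. Substituting this bound into the conclusion of Lemma 2.1 produces exactly the claimed inequality
$$
\sum_{a\leq N}\tau_r(a) \leq \frac{1}{(r-1)!} N (\log N + r-1)^{r-1} \le \frac{(1+c)^{r-1}}{(r-1)!}N \log^{r-1} N.
$$

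Since the argument is essentially a one-line substitution, I do not anticipate any genuine obstacle; the only point requiring a word of care is the validity of the exponentiation step, i.e. the monotonicity of $t \mapsto t^{r-1}$ on the range in question, which is guaranteed because $\log N \ge 0$ (from $N \ge 1$) and $r-1 \ge 0$ make every quantity nonnegative. To be fully rigorous I would also check the boundary cases: when $r=1$ the exponent is $0$ and both sides reduce to $N$ times $1$, and when $N=1$ the hypothesis $r-1 \le c\log N = 0$ forces $r=1$, so these degenerate situations are trivially consistent with the stated bound.
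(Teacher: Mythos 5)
Your proof is correct and is exactly the intended derivation: the paper states the corollary as an immediate consequence of Lemma 2.1, obtained by bounding $\log N + (r-1) \leq (1+c)\log N$ via the hypothesis $r-1 \leq c\log N$ and raising to the $(r-1)$-th power. Your additional checks of the degenerate cases $r=1$ and $N=1$ are careful but not needed beyond what the monotonicity argument already covers.
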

    We define $\tau_r'(a)$ to be the number of ways of writing $a$ as ordered product of $r$ positive integers, each of which does not exceed $N$.
    \begin{lemma}
    We have
    \begin{equation}
    \sum_{a\leq N^r} (\tau_r'(a))^2 \leq \left(\sum_{a\leq N}\tau_r(a)\right)^r.
    \end{equation}
    \end{lemma}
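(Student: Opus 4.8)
The plan is to reinterpret both sides as cardinalities of explicit finite sets and then compare them via a factorization-matrix map. Expanding the square on the left, and noting that $\tau_r'(a)$ counts ordered $r$-tuples of factors $\leq N$ with product $a$ (so that summing over $a\leq N^r$ already captures every admissible pair, any product of $r$ integers $\leq N$ being $\leq N^r$), I would write
$$\sum_{a\leq N^r}(\tau_r'(a))^2 = \#\left\{\big((d_1,\ldots,d_r),(e_1,\ldots,e_r)\big) : d_i,e_j\leq N,\ \textstyle\prod_i d_i=\prod_j e_j\right\}.$$
On the right, by definition $T:=\sum_{a\leq N}\tau_r(a)=\#\{(m_1,\ldots,m_r):\prod_i m_i\leq N\}$, so that $T^r$ is exactly the number of $r\times r$ matrices $(c_{ij})$ of positive integers whose every row product $\prod_j c_{ij}$ is $\leq N$ (the $r$ rows being chosen independently).

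Next I would connect the two descriptions through the map
$$(c_{ij})_{1\leq i,j\leq r}\ \longmapsto\ \left(\big(\textstyle\prod_j c_{ij}\big)_i,\ \big(\prod_i c_{ij}\big)_j\right)$$
sending a matrix to the tuple of its row products $d_i$ and column products $e_j$. Any matrix automatically satisfies $\prod_i d_i=\prod_{i,j}c_{ij}=\prod_j e_j$, so its image is an equal-product pair, and if every row product is $\leq N$ then every $d_i\leq N$. The essential point is that this map is \emph{onto} the set of all equal-product pairs: given $d_i,e_j$ with $\prod_i d_i=\prod_j e_j$, one realizes them as margins prime by prime, needing for each prime $p$ a nonnegative integer matrix with row sums $v_p(d_i)$ and column sums $v_p(e_j)$ (where $v_p$ is the $p$-adic valuation); these exist because the two margin sequences share the common total $v_p(\prod_i d_i)$. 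Setting $c_{ij}=\prod_p p^{\gamma_{ij}^{(p)}}$ then assembles a preimage.

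Restricting the domain to matrices with all row products $\leq N$, the map therefore surjects onto the set of pairs with $d_i\leq N$ (with $e_j$ unconstrained), so comparing cardinalities gives $\#\{\text{pairs with } d_i\leq N\}\leq T^r$. Since the left-hand count only adds the further restriction $e_j\leq N$, it is a subset, and hence is bounded by $T^r$, which is the claim. I expect the main obstacle to be the surjectivity step: one must argue cleanly that a nonnegative integer matrix with prescribed equal-total row and column margins always exists — a greedy north-west corner filling, or an easy induction on $\sum_{i,j}\gamma_{ij}^{(p)}$, will do — and then check that these per-prime realizations combine to genuine positive integers $c_{ij}$, so that the map is well defined on the nose.
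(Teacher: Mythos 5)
Your argument is correct, but it takes a genuinely different route from the paper. The paper's own proof is a four-step chain of sum manipulations: it writes $\sum_{a\leq N^r}(\tau_r'(a))^2=\sum_{a_1,\dots,a_r\leq N}\tau_r'(a_1\cdots a_r)$, bounds this by $\sum_{a_1,\dots,a_r\leq N}\tau_r'(a_1)\cdots\tau_r'(a_r)=\bigl(\sum_{a\leq N}\tau_r'(a)\bigr)^r$, and finishes by noting $\tau_r'(a)=\tau_r(a)$ for $a\leq N$. The one nontrivial step there, $\tau_r'(a_1\cdots a_r)\leq\tau_r'(a_1)\cdots\tau_r'(a_r)$, is a submultiplicativity property of the divisor function that the paper invokes without proof. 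Your matrix-surjection argument is precisely the fiberwise combinatorial content of that step, made explicit and self-contained: fixing the row margins $(d_1,\dots,d_r)$, the surjectivity of your map (matrix $\mapsto$ column products) from matrices with row products $d_i$ onto tuples $(e_j)$ with $\prod_j e_j=\prod_i d_i$ is exactly the inequality $\tau_r\bigl(\prod_i d_i\bigr)\leq\prod_i\tau_r(d_i)$, and your prime-by-prime contingency-table construction (northwest-corner filling for equal-total margins, assembled via $c_{ij}=\prod_p p^{\gamma_{ij}^{(p)}}$ over the finitely many relevant primes) supplies the proof the paper omits. What your approach buys: a single transparent counting surjection, with the two sources of slack visible (discarding the constraint $e_j\leq N$, and the non-injectivity of the margins map). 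What the paper's approach buys: brevity --- three identities and one inequality, with no need for the per-prime existence argument --- at the cost of leaning on a standard but unproved inequality. Your handling of the delicate points is sound: the preimage is a genuine positive-integer matrix since only primes dividing $\prod_i d_i$ occur, and the condition $d_i\leq N$ is inherited by the row products, so the restricted map does surject onto the larger set of pairs in which only the $d_i$ are constrained, of which the left-hand count is a subset.
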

    \begin{proof}
    \begin{align*}
    \sum_{a\leq N^r} (\tau_r'(a))^2 &= \sum_{a_1, \cdots, a_r \leq N} \tau_r'(a_1 \cdots a_r)
    \leq  \sum_{a_1, \cdots, a_r \leq N} \tau_r'(a_1) \cdots \tau_r'(a_r)\\
    &=\left(\sum_{a\leq N} \tau_r'(a)\right)^r
    =\left(\sum_{a\leq N} \tau_r (a)\right)^r.
    \end{align*}
    \end{proof}
    \begin{corr}
    Let $c>0$. If $N\geq 1$ and $r-1\leq c\log N$, then
    \begin{equation}
    \sum_{a\leq N^r} (\tau_r'(a))^2 \leq \left(\frac{(1+c)^{r-1}}{(r-1)!}N \log^{r-1} N\right)^r.
    \end{equation}
    \end{corr}
    We follow the notations in ~\cite{S}, and give a critical upper estimate of the following quantities:
    \begin{lemma}
    Let
    \begin{equation}
    S_4 = \sum_{p\leq x}\sum_{\chi (\mathrm{  mod \  }p)}^{ \ \ \ \ *} \frac{1}{\mathrm{ord} (\chi)}\left|\sum_{a\leq N}\chi(a)\right|
    \end{equation}
    and
    \begin{equation}
    S_{10}= \sum_{p\leq x}\sum_{\substack{{q\leq x}\\{p\neq q}}} \sum_{\chi (\mathrm{ mod \ }pq)}^{ \ \ \ \ *} \frac{1}{\mathrm{ord} (\chi)}\left|\sum_{a\leq N}\chi(a)\right|.
    \end{equation}
    The sum $\sum^{*}$ denotes the sum over non-principal primitive characters.
    Then by H\"{o}lder inequality and the large sieve inequality,
    \begin{equation}
    S_4 \ll x^{1-\frac{1}{2r}} (x^{\frac{1}{r}} + N^{\frac12})\left(\sum_{a\leq N^r}(\tau_r'(a))^2\right)^{\frac1{2r}}
    \end{equation}
    and
    \begin{equation}
    S_{10} \ll (x^2)^{1-\frac{1}{2r}} (x^{\frac{2}{r}} + N^{\frac12})\left(\sum_{a\leq N^r}(\tau_r'(a))^2\right)^{\frac1{2r}}.
    \end{equation}
    \end{lemma}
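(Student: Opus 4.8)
The plan is to treat $S_4$ and $S_{10}$ in exact parallel, the only differences being the size of the moduli (primes $p\le x$ versus products $pq\le x^2$) and the resulting ``order sums.'' I would set $M(\chi)=\sum_{a\le N}\chi(a)$ and use that each $\chi$ is completely multiplicative, so raising to the $r$-th power collapses into the sum counted by $\tau_r'$:
\[
M(\chi)^r=\Big(\sum_{a\le N}\chi(a)\Big)^r=\sum_{a\le N^r}\tau_r'(a)\,\chi(a).
\]
First I would apply H\"older's inequality to the character sum with the conjugate exponents $2r$ and $\tfrac{2r}{2r-1}$, which separates the order weight from a $2r$-th moment of $M(\chi)$:
\[
\sum_{\chi}\frac{|M(\chi)|}{\mathrm{ord}(\chi)}\le\Big(\sum_{\chi}\frac{1}{\mathrm{ord}(\chi)^{2r/(2r-1)}}\Big)^{1-\frac{1}{2r}}\Big(\sum_{\chi}|M(\chi)|^{2r}\Big)^{\frac{1}{2r}}.
\]

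For the second factor I would write $|M(\chi)|^{2r}=|M(\chi)^r|^2$ and feed the displayed expansion into the large sieve inequality: with moduli bounded by $x$ (for $S_4$) or by $x^2$ (for $S_{10}$, since $pq\le x^2$) and coefficients $\tau_r'(a)$ supported on $a\le N^r$, this is $\ll(x^2+N^r)\sum_{a\le N^r}(\tau_r'(a))^2$, respectively $\ll(x^4+N^r)\sum_{a\le N^r}(\tau_r'(a))^2$. Taking the $2r$-th root and using $(A+B)^{1/(2r)}\le A^{1/(2r)}+B^{1/(2r)}$ yields exactly the factors $(x^{1/r}+N^{1/2})$ and $(x^{2/r}+N^{1/2})$ together with $\big(\sum_{a\le N^r}(\tau_r'(a))^2\big)^{1/(2r)}$ in (2.7) and (2.8).

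It then remains to bound the order sums, with $s=\tfrac{2r}{2r-1}\in(1,2]$. For $S_4$ this is immediate: every character mod $p$ is primitive, so $\sum_{\chi\bmod p}^{*}\mathrm{ord}(\chi)^{-s}=\sum_{d\mid p-1,\,d>1}\phi(d)d^{-s}\le\tau(p-1)$ because $\phi(d)d^{-s}\le d^{1-s}\le1$. Summing over $p\le x$ and invoking $\sum_{p\le x}\tau(p-1)\ll x$ (from pairing divisors and Brun--Titchmarsh) gives an order sum $\ll x$, hence a first factor $\ll x^{1-1/(2r)}$, which is what (2.7) asks for.

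The delicate point, and the main obstacle, is the order sum for $S_{10}$, where I must prove $\ll x^2$. A primitive character mod $pq$ corresponds to a pair $(\chi_1,\chi_2)$ of non-principal characters with $\mathrm{ord}(\chi)=\mathrm{lcm}(\mathrm{ord}\,\chi_1,\mathrm{ord}\,\chi_2)$, and naive bounds that replace the lcm by one coordinate or by $\sqrt{d_1d_2}$ are hopelessly lossy (they succeed only for $r=1$). Instead I would interchange summation and write the order sum as $\sum_{d_1,d_2}\phi(d_1)\phi(d_2)\,\mathrm{lcm}(d_1,d_2)^{-s}\,\pi(x;d_1,1)\,\pi(x;d_2,1)$, then apply Brun--Titchmarsh $\pi(x;d,1)\ll x/(\phi(d)\log(2x/d))$. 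The factors $\phi(d_i)$ cancel, and for $d_i$ bounded away from $x$ each denominator gives $\log(2x/d_i)\gg\log x$, leaving
\[
\ll\frac{x^2}{\log^2 x}\sum_{d_1,d_2}\frac{1}{\mathrm{lcm}(d_1,d_2)^s}=\frac{x^2}{\log^2 x}\cdot\frac{\zeta(s)^3}{\zeta(2s)},
\]
the Euler-product identity $\sum_{d_1,d_2}\mathrm{lcm}(d_1,d_2)^{-s}=\zeta(s)^3/\zeta(2s)$ (each local factor being $\sum_{i,j}p^{-s\max(i,j)}=(1+p^{-s})(1-p^{-s})^{-2}$) being exactly what forces convergence for every $s>1$. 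Its only defect is the pole of $\zeta(s)$, giving $\zeta(s)^3\ll(s-1)^{-3}=(2r-1)^3\ll r^3$. The last step is to check that this $r^3$ is absorbed by the saved $\log^2 x$: in the intended regime $r\asymp\sqrt{\log x}$, so $r^3\ll(\log x)^{3/2}\ll\log^2 x$, whence the order sum is $\ll x^2$ and the first factor is $\ll(x^2)^{1-1/(2r)}$, establishing (2.8). I would finally dispose of the terms with $d_i$ close to $x$, where Brun--Titchmarsh is weak, by splitting them off and using the trivial bound $\pi(x;d,1)\le 1+x/d$; this remainder is of strictly lower order and routine.
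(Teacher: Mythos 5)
Your proposal is correct and takes essentially the same route the paper intends: H\"older's inequality with conjugate exponents $\left(\tfrac{2r}{2r-1},\,2r\right)$, the expansion $\left(\sum_{a\le N}\chi(a)\right)^r=\sum_{a\le N^r}\tau_r'(a)\chi(a)$ by complete multiplicativity, and the large sieve over primitive characters to moduli $p\le x$ (resp.\ $pq\le x^2$), which is exactly what the paper's one-line justification ``by H\"older inequality and the large sieve inequality'' refers to. The only substance the paper leaves implicit is the pair of order-sum bounds $\sum_{p\le x}\sum^{*}_{\chi}\mathrm{ord}(\chi)^{-2r/(2r-1)}\ll x$ and its $\mathrm{lcm}$-analogue $\ll x^2$; your Brun--Titchmarsh argument together with the identity $\sum_{d_1,d_2}\mathrm{lcm}(d_1,d_2)^{-s}=\zeta(s)^3/\zeta(2s)$ supplies these correctly, and the resulting $\zeta(s)^3\ll r^3$ loss absorbed into the saved $\log^2 x$ is harmless because the paper only invokes the lemma with $N^{r-1}<x^2\le N^r$ (resp.\ $N^{r-1}<x^4\le N^r$) and $N\ge\exp(3.42\sqrt{\log x})$, so that $r\ll\sqrt{\log x}\ll\log^{2/3}x$ throughout.
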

    \begin{corr}
    By taking $N^{r-1}< x^2\leq N^r$, we have for $r-1 \leq c\log N$,
    \begin{equation}
    S_4 \ll xN^{\frac14+\frac1{4r}} \left(\frac{(1+c)^{r-1}}{(r-1)!}N \log^{r-1} N\right)^{\frac12} = xN^{\frac34+\frac1{4r}} \left(\frac{(1+c)^{r-1}}{(r-1)!} \log^{r-1} N\right)^{\frac12}.
    \end{equation}
    By taking $N^{r-1}<x^4\leq N^r$, we have for $r-1 \leq c\log N$,
    \begin{equation}
    S_{10}\ll x^2N^{\frac14+\frac1{4r}} \left(\frac{(1+c)^{r-1}}{(r-1)!}N \log^{r-1} N\right)^{\frac12}=x^2N^{\frac34+\frac1{4r}} \left(\frac{(1+c)^{r-1}}{(r-1)!} \log^{r-1} N\right)^{\frac12}.
    \end{equation}
    \end{corr}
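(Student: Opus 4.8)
The plan is to derive this corollary by feeding the two large-sieve bounds of the preceding lemma into the divisor estimate of Corollary 2.2, and then collapsing the elementary prefactors in $x$ and $N$ by means of the stated choice of $r$. No new analytic input is required; the whole content is bookkeeping of exponents, carried out once for $S_4$ and once, verbatim, for $S_{10}$.

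First I would treat $S_4$, starting from the bound $S_4\ll x^{1-\frac{1}{2r}}(x^{\frac1r}+N^{\frac12})\bigl(\sum_{a\leq N^r}(\tau_r'(a))^2\bigr)^{\frac1{2r}}$ and using the constraint $N^{r-1}<x^2\leq N^r$ in both directions. The upper inequality $x^2\leq N^r$ gives $x^{1/r}\leq N^{1/2}$, so $x^{1/r}+N^{1/2}\ll N^{1/2}$ and that factor may simply be replaced by $N^{1/2}$. The lower inequality $N^{r-1}<x^2$ gives $x>N^{(r-1)/2}$, hence $x^{-1/(2r)}<N^{-(r-1)/(4r)}=N^{-\frac14+\frac1{4r}}$, so that $x^{1-\frac1{2r}}=x\cdot x^{-\frac1{2r}}<x\,N^{-\frac14+\frac1{4r}}$. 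Multiplying the two contributions collapses the powers of $N$, since $N^{-\frac14+\frac1{4r}}\cdot N^{\frac12}=N^{\frac14+\frac1{4r}}$, and yields $S_4\ll xN^{\frac14+\frac1{4r}}\bigl(\sum_{a\leq N^r}(\tau_r'(a))^2\bigr)^{\frac1{2r}}$. Inserting the bound of Corollary 2.2, namely $\bigl(\sum_{a\leq N^r}(\tau_r'(a))^2\bigr)^{1/(2r)}\leq\bigl(\frac{(1+c)^{r-1}}{(r-1)!}N\log^{r-1}N\bigr)^{1/2}$, then gives the first displayed form; extracting the factor $N^{1/2}$ from the bracket turns $N^{\frac14+\frac1{4r}}\cdot N^{\frac12}$ into $N^{\frac34+\frac1{4r}}$ and produces the second form.

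The argument for $S_{10}$ is identical after replacing $x$ by $x^2$ throughout and using the choice $N^{r-1}<x^4\leq N^r$. Here $x^4\leq N^r$ gives $x^{2/r}\leq N^{1/2}$, so $x^{2/r}+N^{1/2}\ll N^{1/2}$, while $N^{r-1}<x^4$ gives $x^{-1/r}<N^{-\frac14+\frac1{4r}}$ and hence $(x^2)^{1-\frac1{2r}}=x^2\cdot x^{-\frac1r}<x^2N^{-\frac14+\frac1{4r}}$. Combining the powers of $N$ again produces $N^{\frac14+\frac1{4r}}$, giving $S_{10}\ll x^2N^{\frac14+\frac1{4r}}\bigl(\sum_{a\leq N^r}(\tau_r'(a))^2\bigr)^{\frac1{2r}}$, after which Corollary 2.2 and the same extraction of $N^{1/2}$ yield both displayed estimates.

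Because every inequality here is elementary, there is no genuine obstacle; the single point requiring care is that the two halves of the constraint on $r$ are used in opposite directions — the upper half $x^2\leq N^r$ (resp. $x^4\leq N^r$) to kill the $x^{1/r}$ (resp. $x^{2/r}$) term, and the lower half $N^{r-1}<x^2$ (resp. $N^{r-1}<x^4$) to trade the fractional power of $x$ for a power of $N$. One should also note that this choice of $r$ is consistent with the hypothesis $r-1\leq c\log N$, which is precisely what is needed to invoke Corollary 2.2; checking that compatibility and optimizing the resulting constant is where the substantive work of the paper lies, but it is external to the present corollary.
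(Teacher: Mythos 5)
Your proposal is correct and follows exactly the paper's (implicit) argument: the corollary is stated without a separate proof precisely because it is the direct substitution of the hypotheses $N^{r-1}<x^2\leq N^r$ (resp. $N^{r-1}<x^4\leq N^r$) into the large-sieve bounds of Lemma 2.3, using $x^{1/r}\leq N^{1/2}$ (resp. $x^{2/r}\leq N^{1/2}$) on one side and $x^{-1/(2r)}<N^{-\frac14+\frac1{4r}}$ (resp. $x^{-1/r}<N^{-\frac14+\frac1{4r}}$) on the other, followed by the divisor-sum estimate of Corollary 2.2. Your exponent bookkeeping and the remark that the two halves of the constraint on $r$ are used in opposite directions are both accurate, so there is nothing to add.
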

    By ~\cite{S0}, we may assume that $N=\exp(K\sqrt{\log x})$.  Then $N^{\frac1{4r}}=\exp(O(K^2))= (\log x)^{O(1)}$ and we have the following:
    \begin{equation}
    \log N=K \sqrt{\log x},
    \end{equation}
    \begin{equation}
    \log\log N = \log K + \frac12 \log\log x,
    \end{equation}
    \begin{equation}
    r-1<\frac{2\log x}{\log N} = \frac2K  \sqrt{\log x}\leq r.
    \end{equation}
    By Stirling's formula, we have
    \begin{align*}
    &S_4 \ll x(\log x)^{O(1)} N \exp\left( -\frac14 \log N  + \frac12 (r-1) \log (1+c) - \frac12 \log (r-1)!+\frac{r-1}2\log\log N \right)\\
    &\ll x N  \exp\left( \sqrt{\log x} \left( -\frac K4  + \frac1K   \log(1+c) - \frac1K \log 2 +\frac1K  + \frac{2\log K}K +o(1)\right)\right).
    \end{align*}
    From $r-1\leq c\log N$, we have $\frac2K \leq cK$. Taking $c=\frac2{K^2}$, we have
    \begin{equation}
    S_4 \ll x N  \exp\left(\sqrt{\log x} \left( -\frac K4  + \frac1K   \log\left(1+\frac2{K^2}\right) - \frac1K \log 2 +\frac1K  + \frac{2\log K}K +o(1)\right)\right).
    \end{equation}
    If $K\geq 3.42$, then we see that
    $$
    f_1(K)=-\frac K4  + \frac1K   \log\left(1+\frac2{K^2}\right) - \frac1K \log 2 +\frac1K  + \frac{2\log K}K  <0.$$
    We finally obtain that
    \begin{lemma}
    If $N>\exp(3.42\sqrt{\log x})$, then there is a positive constant $c_2$ such that
    \begin{equation}
    S_4 \ll x N \exp\left(-c_2 \sqrt{\log x}\right).
    \end{equation}
    \end{lemma}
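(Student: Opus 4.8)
The bound (26) has already done the analytic heavy lifting (H\"older, the large sieve, the divisor estimates of Corollary 2.3, and Stirling's formula), and it reduces the lemma to a single inequality in one real variable. Indeed, once we know that the exponent
\[
f_1(K) = -\frac{K}{4} + \frac{1}{K}\log\left(1+\frac{2}{K^2}\right) - \frac{\log 2}{K} + \frac{1}{K} + \frac{2\log K}{K}
\]
is bounded above by a strictly negative constant for all $K \geq 3.42$, the desired estimate drops out after absorbing the $o(1)$. So the whole remaining task is to show $f_1(K) < 0$ on $[3.42,\infty)$.

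My plan is to pair one boundary evaluation with a monotonicity statement. First I would evaluate $f_1(3.42)$ directly; the positive and negative contributions very nearly cancel, and one finds $f_1(3.42) = -\eta$ with $\eta > 0$ small (numerically $\eta \approx 2\times 10^{-5}$). This is exactly why the threshold is taken to be $3.42$: it is essentially the root of $f_1$, rounded so that $f_1$ is just barely negative there. Second, I would differentiate. Writing
\[
f_1'(K) = -\frac14 - \frac{4}{K^2(K^2+2)} - \frac{\log(1+2/K^2)}{K^2} + \frac{\log 2 + 1 - 2\log K}{K^2},
\]
the first three terms are manifestly negative, while the last is negative as soon as $\log K > (1+\log 2)/2$, i.e. $K > e^{(1+\log 2)/2} \approx 2.33$, which holds throughout the range. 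Hence every summand of $f_1'$ is negative on $[3.42,\infty)$, so $f_1$ is strictly decreasing there; combined with the boundary value this gives $f_1(K) \leq f_1(3.42) = -\eta$ for all $K \geq 3.42$. (The monotonicity also explains why enlarging $N$, i.e. increasing $K$, only improves the bound, so that no uniformity problem arises as $K$ ranges over $(3.42,\infty)$.)

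With this in hand the lemma is immediate. Under the hypothesis $N > \exp(3.42\sqrt{\log x})$ we have $K = \log N/\sqrt{\log x} > 3.42$, hence $f_1(K) \leq -\eta$, and (26) becomes
\[
S_4 \ll xN\exp\left(\sqrt{\log x}\,(-\eta + o(1))\right).
\]
For all sufficiently large $x$ the $o(1)$ is smaller than $\eta/2$, so choosing $c_2 = \eta/2 > 0$ yields $S_4 \ll xN\exp(-c_2\sqrt{\log x})$.

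The step I expect to be the genuine obstacle is the boundary evaluation $f_1(3.42) < 0$. Because the threshold was chosen to make $f_1$ vanish to essentially four decimal places, the inequality is extremely tight — the five terms sum to something of order $10^{-5}$ — so it cannot be settled by crude bounds. Instead one must control each of $\log(1.170975)$, $\log 2$, and $\log 3.42$ rigorously from the correct side, to enough precision to certify that the total is genuinely negative and not an artifact of rounding. Once that single delicate numerical inequality is pinned down, the monotonicity computation and the absorption of the $o(1)$ are entirely routine.
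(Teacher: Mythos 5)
Your proposal is correct and takes essentially the same route as the paper: the paper likewise treats (26) as the reduction of the lemma to the sign of $f_1$, and justifies $f_1(K)<0$ for $K\geq 3.42$ by monotonicity plus numerics, observing in its Section 3 that $f_1(K)+\frac{K}{4}=\frac1K\left(\log\left(\frac{K^2}{2}+1\right)+1\right)$ is decreasing for $K>0$, so that $f_1$ is strictly decreasing with unique root $3.41990570\cdots<3.42$. Your term-by-term differentiation (which is computed correctly, with the last term negative for $K>e^{(1+\log 2)/2}\approx 2.33$) and the boundary evaluation $f_1(3.42)\approx -3\times 10^{-5}<0$, followed by absorbing the $o(1)$ into $c_2=\eta/2$, is the same monotonicity-plus-boundary-value argument, merely without the paper's algebraic simplification of $f_1+\frac{K}{4}$.
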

    Now, we deal with $S_{10}$. Let $N=\exp(K\sqrt{\log x})$. By ~\cite{S0}, we may assume that $K\leq 6 \sqrt{\log\log x}$.  Then $N^{\frac1{4r}}=\exp(O(K^2))= (\log x)^{O(1)}$ and we have the following:
    \begin{equation}
    \log N=K \sqrt{\log x},
    \end{equation}
    \begin{equation}
    \log\log N = \log K + \frac12 \log\log x,
    \end{equation}
    \begin{equation}
    r-1<\frac{4\log x}{\log N} = \frac4K \sqrt{\log x}\leq r.
    \end{equation}
    By Stirling's formula, we have
    \begin{align*}
    &S_{10} \ll x^2 (\log x)^{O(1)}N \exp\left( -\frac14 \log N  + \frac12 (r-1) \log (1+c) - \frac12 \log (r-1)!+\frac{r-1}2\log\log N \right)\\
    &\ll x^2 N \exp\left( \sqrt{\log x} \left( -\frac K4  + \frac2K   \log(1+c) - \frac2K \log 4 +\frac2K  + \frac{4\log K}K+o(1) \right)\right).
    \end{align*}
    From $r-1\leq c\log N$, we have $\frac4K \leq cK$. Taking $c=\frac4{K^2}$, we have
    \begin{equation}
    S_{10} \ll x^2 N  \exp\left( \sqrt{\log x} \left( -\frac K4  + \frac2K   \log\left(1+\frac4{K^2}\right) - \frac2K \log 4 +\frac2K  + \frac{4\log K}K +o(1)\right)\right).
    \end{equation}
    If $K\geq 4.8365$, then we see that
    $$
    f_2(K)=-\frac K4  + \frac2K   \log\left(1+\frac4{K^2}\right) - \frac2K \log 4 +\frac2K  + \frac{4\log K}K  <0.$$
    Therefore, we obtain that
    \begin{lemma}
    If $N>\exp(4.8365\sqrt{\log x})$, then there is a positive constant $c_3$ such that
    \begin{equation}
    S_{10} \ll x^2 N \exp\left(-c_3 \sqrt{\log x}\right).
    \end{equation}
    Denote by $\tilde{S}_{10}$ the following character sum:
    \begin{equation}
    \tilde{S}_{10}= \sum_{p\leq x}\sum_{\substack{{q\leq x}\\{p\neq q}}} \sum_{\chi (\mathrm{ mod \ }pq)}^{ \ \ \ \ \star} \frac{1}{\mathrm{ord} (\chi)}\left|\sum_{a\leq N}\chi(a)\right|
    \end{equation}
    where $\Sigma^{\star}$ is over non-principal characters. Then
    \begin{equation}
    \tilde{S}_{10}\ll x^2 N \exp\left(-c_3 \sqrt{\log x}\right).
    \end{equation}
    \end{lemma}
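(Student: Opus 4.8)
The bound on $S_{10}$ is already within reach: combining the exponential estimate for $S_{10}$ derived above with the inequality $f_2(K)<0$, valid for every $K\ge 4.8365$, one may fix any $c_3$ with $0<c_3<-f_2(K)$, so that for $x$ large the $o(1)$ in the exponent is absorbed and $S_{10}\ll x^2 N\exp(-c_3\sqrt{\log x})$ follows. The substantive task is the second assertion, namely to pass from the sum over \emph{primitive} characters to the sum $\tilde S_{10}$ over \emph{all} non-principal characters. The plan is to organize the non-principal characters modulo $pq$ by their conductor and to reduce the imprimitive contributions to the already-controlled quantity $S_4$.

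Since $p\ne q$ are primes, $(\Z/pq\Z)^{*}\cong(\Z/p\Z)^{*}\times(\Z/q\Z)^{*}$, so a character $\chi=\chi_p\chi_q$ modulo $pq$ is non-principal exactly when at least one of $\chi_p,\chi_q$ is non-principal, and its conductor is then one of $pq$ (both non-principal, i.e.\ $\chi$ primitive), $p$ (only $\chi_p$ non-principal), or $q$ (only $\chi_q$ non-principal). Writing $\Sigma_p$ and $\Sigma_q$ for the conductor-$p$ and conductor-$q$ contributions, I would record the splitting
$$\tilde S_{10}=S_{10}+\Sigma_p+\Sigma_q,$$
and, invoking the symmetry $p\leftrightarrow q$, reduce everything to bounding $\Sigma_p$.

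For a character $\chi$ modulo $pq$ of conductor $p$, induced by a primitive (hence non-principal) character $\psi$ modulo $p$, one has $\mathrm{ord}(\chi)=\mathrm{ord}(\psi)$ and $\chi(a)=\psi(a)$ for $q\nmid a$ while $\chi(a)=0$ otherwise, so that
$$\sum_{a\le N}\chi(a)=\sum_{\substack{a\le N\\ q\nmid a}}\psi(a)=\sum_{a\le N}\psi(a)-\psi(q)\sum_{b\le N/q}\psi(b).$$
After the triangle inequality and summation, $\Sigma_p\le\Sigma_p^{(1)}+\Sigma_p^{(2)}$, where $\Sigma_p^{(1)}=\sum_{q\le x}S_4\le\pi(x)\,S_4$ and $\Sigma_p^{(2)}=\sum_{p\le x}\sum_{q\le x}\sum_{\psi}^{*}\frac1{\mathrm{ord}(\psi)}\big|\sum_{b\le N/q}\psi(b)\big|$. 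Since $N>\exp(4.8365\sqrt{\log x})>\exp(3.42\sqrt{\log x})$, the bound $S_4\ll xN\exp(-c_2\sqrt{\log x})$ applies, giving $\Sigma_p^{(1)}\ll\frac{x}{\log x}\cdot xN\exp(-c_2\sqrt{\log x})\ll x^2N\exp(-c_2\sqrt{\log x})$, which is of the required shape.

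The defect term $\Sigma_p^{(2)}$, arising purely from the imprimitivity of $\chi$, is where one must be a little careful, but it turns out to be harmless for a simple reason: its inner sum is empty unless $q\le N$, so the range of $q$ collapses from $x$ down to $N$. Using $\big|\sum_{b\le N/q}\psi(b)\big|\le N/q$, the elementary estimate $\sum_{\psi}^{*}\frac1{\mathrm{ord}(\psi)}=\sum_{d\mid p-1,\,d>1}\frac{\phi(d)}{d}\le\tau(p-1)$, Mertens' bound $\sum_{q\le N}1/q\ll\log\log N$, and $\sum_{p\le x}\tau(p-1)\le\sum_{n\le x}\tau(n)\ll x\log x$, I would obtain
$$\Sigma_p^{(2)}\ll N\log\log N\sum_{p\le x}\tau(p-1)\ll xN(\log x)^{2}.$$
Because $(\log x)^{2}=o\big(x\exp(-c\sqrt{\log x})\big)$ for every $c>0$, this is $o\big(x^2N\exp(-c\sqrt{\log x})\big)$ and hence negligible. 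Collecting $S_{10}$, $\Sigma_p=\Sigma_p^{(1)}+\Sigma_p^{(2)}$, and the symmetric $\Sigma_q$, and shrinking $c_3$ if necessary, yields $\tilde S_{10}\ll x^2N\exp(-c_3\sqrt{\log x})$. The one genuine obstacle is exactly the defect term $\sum_{b\le N/q}\psi(b)$; the key observation that defuses it is the collapse of the $q$-range (since $N/q<1$ once $q>N$), which lets the trivial bound beat the target by essentially a factor of $x$, so no further appeal to the large sieve is needed for that piece.
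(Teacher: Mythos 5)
Your proof is correct and follows essentially the same route as the paper: the $S_{10}$ bound is read off from the Stirling-formula estimate together with $f_2(K)<0$ for $K\geq 4.8365$, and your conductor decomposition of the non-principal characters mod $pq$ is exactly the justification of the paper's one-line remark $\tilde{S}_{10}\ll S_{10}+xS_4$, combined with Lemma 2.4 (applicable since $4.8365>3.42$). Your only addition is the explicit treatment of the imprimitivity defect $\sum_{b\leq N/q}\psi(b)$, which the paper's compressed inequality silently absorbs; your observation that the $q$-range collapses to $q\leq N$, giving the trivial bound $\ll xN(\log x)^2$, correctly shows this term is negligible against $x^2N\exp(-c_3\sqrt{\log x})$.
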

    For the estimate for $\tilde{S}_{10}$, note that
    $$
    \tilde{S}_{10} \ll S_{10}+ x S_4.$$
    \begin{proof}[Proof of Theorem 1.1]
    As in ~\cite[Theorem 1]{S}, we define a character sum $c_r(\chi)$ where $\chi$ is a Dirichlet character modulo $p$:
    For $r|p-1$, define
    \begin{equation}
    c_r(\chi)=\frac1{p-1}\sum_{\substack{ a<p \\ \ell_a (p)=\frac{p-1}r}} \chi(a).
    \end{equation}
    Then we have
    \begin{align*}
    S_3 &= N^{-1}\sum_{a\leq N}\sum_{p\leq x}\sum_{\substack{{w|p-1}\\{\ell_a(p)=\frac{p-1}w}}} w^{-1}
    =N^{-1}\sum_{a\leq N}\sum_{p\leq x}\sum_{w|p-1} w^{-1}\sum_{\chi (\mathrm{ mod \ } p)}c_w(\chi)\chi(a)\\
    &=N^{-1}\sum_{p\leq x}\sum_{w|p-1}w^{-1}\sum_{\chi (\mathrm{ mod \ } p)}c_w(\chi)\sum_{a\leq N}\chi(a)\\
    &=N^{-1}\sum_{p\leq x}\sum_{w|p-1} \frac{ \phi(\frac{p-1}w)}{w(p-1)}\left(N+O(1)+O\left(\frac Np\right)\right)+O\left(N^{-1}\sum_{p\leq x}\sum_{w|p-1}w^{-1}\sum_{\chi (\mathrm{ mod \ } p)}^{\ \ \ \ *}|c_w(\chi)|\left|\sum_{a\leq N}\chi(a)\right|\right) \\
    &=\sum_{p\leq x}\sum_{w|p-1}\frac{ \phi(\frac{p-1}w)}{w(p-1)} +O\left( \frac{x}{N\log x}\right) + O(\log\log x)+ O\left(\frac{S_4 \log x\log\log x}N\right)\\
    &=C\textrm{Li}(x) + O\left(\frac{x}{\log^A x}\right) + O\left(x\exp\left(-c_4 \sqrt{\log x}\right)\right)
    \end{align*}
    by Lemma 2.4 and ~\cite[Lemma 12]{S}. This completes the proof, since the second error term is dominated by the first.
    \end{proof}
    \begin{proof}[Proof of Theorem 1.2]
    As in ~\cite[Theorem 2]{S}, let
    \begin{equation}
    S_5=N^{-1}\sum_{a\leq N}\left(\sum_{p\leq x}\frac{\ell_a(p)}{p-1}-C\textrm{Li}(x)\right)^2,
    \end{equation}
    then
    \begin{equation}
    S_5= N^{-1}\sum_{p\leq x}\sum_{\substack{{q\leq x}\\{p\neq q}}}\sum_{a\leq N}\frac{\ell_a(p)\ell_a(q)}{(p-1)(q-1)} - C^2 \textrm{Li}^2(x) +O\left(\frac{x^2}{\log^E x}\right).
    \end{equation}
    Let
    \begin{equation}
    S_6=N^{-1}\sum_{p\leq x}\sum_{\substack{{q\leq x}\\{p\neq q}}}\sum_{a\leq N}\frac{\ell_a(p)\ell_a(q)}{(p-1)(q-1)}.
    \end{equation}
    Stephens decomposed $S_6$ into three parts:
    \begin{align*}
    S_6&=N^{-1}\sum_{p\leq x}\sum_{\substack{{q\leq x}\\{p\neq q}}}\sum_{w|p-1}\sum_{t|q-1}\frac1{wt}\sum_{\chi_1 (\mathrm{ mod \  }p)}c_w(\chi_1)\sum_{\chi_2 (\mathrm{ mod \ }q)}c_t(\chi_2)\sum_{a\leq N}\chi_1\chi_2(a)\\
    &=S_7+2S_8+S_9\end{align*}
    where $S_7$, $S_8$, $S_9$ are the contributions to $S_6$ when both $\chi_1$, $\chi_2$ are principal, when one of $\chi_1$, $\chi_2$ is principal, and when neither $\chi_1$, $\chi_2$ is principal, respectively. Then
    \begin{equation}
    S_7 = C^2 \textrm{Li}^2 (x) + O\left(\frac{x^2}{\log^E x}\right) + O\left(\frac{x^2}{N\log^2 x}\right),
    \end{equation}
    \begin{equation}
    S_8 \ll \frac{x^2}{\log^E x},
    \end{equation}
    and
    \begin{equation}
    S_9 \ll \frac{S_{10} \log^2 x\log\log x}N.
    \end{equation}
    Then we have by Lemma 2.5,
    \begin{align*}
    S_5 &\ll C^2\textrm{Li}^2 (x)-C^2\textrm{Li}^2 (x) + O\left(\frac{x^2}{\log^E x}\right) + O\left(\frac{x^2}{N\log^2 x}\right)+ O\left(x^2\exp\left(-c_5 \sqrt{\log x}\right)\right).
    \end{align*}
    Since the last two error terms are dominated by $\frac{x^2}{\log^E x}$, this completes the proof of Theorem 1.2.
    \end{proof}
    \subsection{Proof of Theorem 1.3 and 1.4}
    \begin{proof}[Proof of Theorem 1.3]
    As in ~\cite[Theorem 3]{S}, we write the sum as
    \begin{align*}
    N^{-2}\sum_{b\leq N}\sum_{p\leq x}\sum_{\substack{{a\leq N}\\{\ell_b(p)|\ell_a(p)}}}1&=N^{-2}\sum_{b\leq N}\sum_{p\leq x}\sum_{w|p-1}\sum_{\substack{{a\leq N}\\{\ell_b(p)|\frac{p-1}w}\\{\ell_a(p)=\frac{p-1}w}}}1
    =N^{-2}\sum_{b\leq N}\sum_{p\leq x}\sum_{\substack{{w|p-1}\\{\ell_b(p)|\frac{p-1}w}}}\sum_{\substack{{a\leq N}\\{\ell_a(p)=\frac{p-1}w}}}1\\
    &=N^{-2}\sum_{b\leq N}\sum_{p\leq x}\sum_{\substack{{w|p-1}\\{\ell_b(p)|\frac{p-1}w}}}\sum_{\chi(\mathrm{ mod \  }p)}c_w(\chi)\sum_{a\leq N}\chi(a)\\
    &=N^{-2}\sum_{b\leq N}\sum_{p\leq x}\sum_{\substack{{w|p-1}\\{\ell_b(p)|\frac{p-1}w}}}\frac{\phi\left(\frac{p-1}w\right)}{p-1}\left(N+O(1)+\left(\frac Np\right)\right)\\ & \ \ +O\left(N^{-2}\sum_{b\leq N}\sum_{p\leq x}\sum_{ w|p-1 }\sum_{\chi(\mathrm{ mod \  }p)}^{ \ \ \ \ *}|c_w(\chi)|\left|\sum_{a\leq N}\chi(a)\right|\right)\\
    &=N^{-1}\sum_{b\leq N}\sum_{p\leq x}\sum_{\substack{{w|p-1}\\{\ell_b(p)|\frac{p-1}w}}}\frac{\phi\left(\frac{p-1}w\right)}{p-1} + O\left(\frac{x}{N\log x}\right)+O(\log\log x) \\
    & \ \ + O\left(N^{-1}\log\log x\sum_{p\leq x}\tau_2(p-1)\sum_{\chi(\mathrm{ mod \ }p)}^{ \ \ \ \ *}\frac1{\textrm{ord} \chi}\left|\sum_{a\leq N}\chi(a)\right|\right).
    \end{align*}
    To treat the last error term, let $c_2$ be the positive constant in Lemma 2.4. Choose any positive constant $c_6$ smaller than $c_2$, and split the sum into two parts:
    \begin{equation}
    N^{-1}\sum_{p\leq x}\tau_2(p-1)\sum_{\chi(\mathrm{ mod \ }p)}^{ \ \ \ \ *}\frac1{\textrm{ord} \chi}\left|\sum_{a\leq N}\chi(a)\right| = \Sigma_1 + \Sigma_2,
    \end{equation}
    where $\Sigma_1$ is the sum over $p$'s with $\tau_2(p-1)< \exp(c_6 \sqrt{\log x})$, and $\Sigma_2$ is the sum over remaining $p$'s.  Then by Lemma 2.4,
    \begin{align*}
    \Sigma_1 &\ll N^{-1} \exp(c_6 \sqrt{\log x}) xN \exp(-c_2\sqrt{\log x}) \\
    &\ll x\exp(-c_7 \sqrt{\log x}).
    \end{align*}
    By an elementary estimate  $\sum_{n\leq x} \tau_2(n)^3 \ll x \log^7 x$,
    \begin{align*}
    \Sigma_2 &\ll N^{-1} \sum_{\substack{{p\leq x}\\{\tau_2(p-1)\geq \exp(c_6\sqrt{\log x})}}}\tau_2(p-1)^2 N\\
    &\ll \sum_{p\leq x} \frac{\tau_2(p-1)^3}{\exp(c_6\sqrt{\log x})}\\
    &\ll x\exp(-c_7 \sqrt{\log x}).
    \end{align*}
    Thus, we obtain
    \begin{equation}
    N^{-2}\sum_{b\leq N}\sum_{p\leq x}\sum_{\substack{{a\leq N}\\{\ell_b(p)|\ell_a(p)}}}1 = N^{-1}\sum_{b\leq N}\sum_{p\leq x}\sum_{\substack{{w|p-1}\\{\ell_b(p)|\frac{p-1}w}}}\frac{\phi\left(\frac{p-1}w\right)}{p-1} + O\left(x\exp(-c_7 \sqrt{\log x})\right).
    \end{equation}
    Now, we treat the first sum on the right side. Again, by writing it with character sums,
    \begin{align*}
    N^{-1}\sum_{b\leq N}\sum_{p\leq x}\sum_{\substack{{w|p-1}\\{\ell_b(p)|\frac{p-1}w}}}\frac{\phi\left(\frac{p-1}w\right)}{p-1}&= N^{-1}\sum_{p\leq x}\sum_{w|p-1} \frac{\phi\left(\frac{p-1}w\right)}{p-1}\sum_{t|\frac{p-1}w}\sum_{\substack{{b\leq N}\\{\ell_b(p)=\frac{p-1}{tw}}}}1\\
    &=N^{-1}\sum_{p\leq x}\sum_{w|p-1}\frac{\phi\left(\frac{p-1}w\right)}{p-1}\sum_{t|\frac{p-1}w}\sum_{\chi (\mathrm{ mod \ }p)}c_{tw}(\chi)\sum_{b\leq N}\chi(b)\\
    &=N^{-1}\sum_{p\leq x}\sum_{w|p-1}\frac{\phi\left(\frac{p-1}w\right)}{p-1}\sum_{t|\frac{p-1}w} \frac{\phi\left(\frac{p-1}{tw}\right)}{p-1}\left(N+O(1)+O\left(\frac Np\right)\right)+E\\
    &=\sum_{p\leq x}\sum_{w|p-1}\frac{\phi\left(\frac{p-1}w\right)}{p-1}\sum_{t|\frac{p-1}w} \frac{\phi\left(\frac{p-1}{tw}\right)}{p-1}+O\left(\frac{x}{N\log x}\right)+O(\log\log x) + E,
    \end{align*}
    where
    \begin{equation}
    E=N^{-1}\sum_{p\leq x}\sum_{w|p-1}\frac{\phi\left(\frac{p-1}w\right)}{p-1}\sum_{t|\frac{p-1}w}\sum_{\chi (\mathrm{ mod \ }p)}^{ \ \ \ \ *}c_{tw}(\chi)\sum_{b\leq N} \chi(b).
    \end{equation}
    We split the sum as before,
    \begin{align*}
    E&\ll N^{-1} \log\log x\sum_{p\leq x}\sum_{w|p-1}\tau_2(w)\sum_{\chi (\mathrm{ mod \ }p)}^{ \ \ \ \ *}\frac1{\textrm{ord} \chi}\left|\sum_{b\leq N}\chi(b)\right|\\
    &= N^{-1}\log\log x\sum_{p\leq x}\tau_3(p-1)\sum_{\chi (\mathrm{ mod \ }p)}^{\ \ \ \ *}\frac1{\textrm{ord} \chi}\left|\sum_{b\leq N}\chi(b)\right|\\
    &=\Sigma_3 + \Sigma_4,
    \end{align*}
    where $\Sigma_3$ is over $p$'s with $\tau_3(p-1)<\exp(c_6 \sqrt{\log x})$, and $\Sigma_4$ is over remaining $p$'s. By the same argument, we have
    \begin{equation}
    \Sigma_3+\Sigma_4 \ll x \exp(-c_7 \sqrt{\log x}). \end{equation}
    Therefore,
    \begin{equation}
    N^{-2}\sum_{b\leq N}\sum_{p\leq x}\sum_{\substack{{a\leq N}\\{\ell_b(p)|\ell_a(p)}}}1 = \sum_{p\leq x}\sum_{w|p-1}\frac{\phi\left(\frac{p-1}w\right)}{p-1}\sum_{t|\frac{p-1}w} \frac{\phi\left(\frac{p-1}{tw}\right)}{p-1}+ O\left(x\exp(-c_7\sqrt{\log x})\right).
    \end{equation}
    By the elementary identity $\sum_{d|n}\phi(d) = n$, we have
    \begin{equation}
    N^{-2}\sum_{b\leq N}\sum_{p\leq x}\sum_{\substack{{a\leq N}\\{\ell_b(p)|\ell_a(p)}}}1 =\sum_{p\leq x}\sum_{w|p-1}\frac{\phi\left(\frac{p-1}w\right)}{w(p-1)} + O\left(x\exp(-c_7\sqrt{\log x})\right).
    \end{equation}
    Then Theorem 1.3 follows by ~\cite[Lemma 12]{S}.
    \end{proof}
    \begin{proof}[Proof of Theorem 1.4]
    Using the same argument as for Theorem 1.2 we deduce that
    \begin{align*}
    N^{-2}\sum_{a\leq N}&\sum_{b\leq N}\left(\sum_{\substack{{p\leq x}\\{p|a^n-b}\\{\textrm{for some $n$}}}}1-C\textrm{Li}(x)\right)^2 = N^{-2}\sum_{a\leq N}\sum_{b\leq N}\sum_{\substack{{p\leq x}\\{p|a^n-b}\\{\textrm{for some $n$}}}}\sum_{\substack{{q\leq x}\\{q\neq p}\\{q|a^m-b}\\{\textrm{for some $m$}}}}1 -C^2 \textrm{Li}^2(x)+O\left(\frac{x^2}{\log^E x}\right)\\
    &=N^{-2}\sum_{a\leq N}\sum_{b\leq N}\sum_{\substack{{p\leq x}\\{\ell_b(p)|\ell_a(p)}}}\sum_{\substack{{q\leq x}\\{q\neq p}\\{\ell_b(q)|\ell_a(q)}}}1 -C^2 \textrm{Li}^2(x)+O\left(\frac{x^2}{\log^E x}\right)\\
    &=N^{-2}\sum_{p\leq x}\sum_{\substack{{q\leq x}\\{q\neq p}}}\sum_{\substack{{w|p-1}\\{t|\frac{p-1}w}}}\sum_{\substack{{u|q-1}\\{s|\frac{q-1}u}}}\sum_{\substack{{\chi_1, \chi_2(\mathrm{ mod \ }p)}\\{\chi_3, \chi_4(\mathrm{ mod \ }q)}}}c_w(\chi_1)c_{wt}(\chi_2)c_u(\chi_3)c_{us}(\chi_4)\sum_{a\leq N}\chi_1\chi_3(a)\sum_{b\leq N}\chi_2\chi_4(b)\\ & \ \ - C^2\textrm{Li}^2(x)+O\left(\frac{x^2}{\log^E x}\right).
    \end{align*}
    We prove that if any one of $\chi_i$, $i=1,2,3,4$ is non-principal, then they contribute $O(x^2/\log^E x)$. In this case, either $\chi_1\chi_3$ or $\chi_2\chi_4$ is non-principal. Suppose that $\chi_1\chi_3$ is non-principal. Then, the contribution is
    \begin{align*}
    &\ll N^{-1}(\log\log x)^4\sum_{p\leq x}\sum_{\substack{{q\leq x}\\{q\neq p}}}\sum_{\substack{{w|p-1}\\{t|\frac{p-1}w}}}\sum_{\substack{{u|q-1}\\{s|\frac{q-1}u}}}\sum_{\substack{{\chi_1, \chi_2(\mathrm{ mod \ }p)}\\{\chi_3, \chi_4(\mathrm{ mod \ }q)}\\{\chi_1\chi_3 \textrm{ is nonprincipal}}}}\frac1{\textrm{ord} (\chi_1)\textrm{ord}(\chi_2) \textrm{ord} (\chi_3)\textrm{ord}(\chi_4)}\left|\sum_{a\leq N}\chi_1\chi_3(a)\right|\\
    &\ll N^{-1}(\log\log x)^4\sum_{p\leq x}\sum_{\substack{{q\leq x}\\{q\neq p}}}\sum_{\substack{{w|p-1}\\{t|\frac{p-1}w}}}\sum_{\substack{{u|q-1}\\{s|\frac{q-1}u}}}\sum_{\substack{{\chi_1(\mathrm{ mod \ }p)}\\{\chi_3(\mathrm{ mod \ }q)}\\{\chi_1\chi_3\textrm{ is nonprincipal}}}} \sum_{\substack{{\chi_2(\mathrm{ mod \ }p)}\\{\chi_4(\mathrm{ mod \ }q)}}} \frac1{\textrm{ord}(\chi_1)\textrm{ord}(\chi_3)}\cdot \frac1{\textrm{ord}(\chi_2)\textrm{ord}(\chi_4)}\left|\sum_{a\leq N}\chi_1\chi_3(a)\right|\\
    &\ll N^{-1}(\log\log x)^4\sum_{p\leq x}\sum_{\substack{{q\leq x}\\{q\neq p}}} \tau_3(p-1)\tau_3(q-1) \tau_2(p-1)\tau_2(q-1)\sum_{\substack{{\chi_1(\mathrm{ mod \ }p)}\\{\chi_3(\mathrm{ mod \  }q)}\\{\chi_1\chi_3\textrm{ is nonprincipal}}}}\frac1{\textrm{ord}(\chi_1)\textrm{ord}(\chi_3)}\left|\sum_{a\leq N}\chi_1\chi_3(a)\right|\\
    &=E_1 + E_2,
    \end{align*}
    where $E_1$ is the sum over $p$, $q$'s with $\tau_3(p-1)\tau_3(q-1) \tau_2(p-1)\tau_2(q-1)<\exp(c_8\sqrt{\log x})$, and $E_2$ is the sum over remaining $p$, $q$'s. Here, we let $c_8$ be a positive number smaller than $c_3$ in Lemma 2.5.
    By Lemma 2.5,
    $$E_1\ll x^2\exp(-c_9 \sqrt{\log x}).$$
    For $E_2$, we have
    \begin{align*}
    E_2 &\ll \sum_{p\leq x}\sum_{q\leq x} \tau_3(p-1)\tau_3(q-1) \tau_2(p-1)^2\tau_2(q-1)^2 \frac{\tau_3(p-1)\tau_3(q-1) \tau_2(p-1)\tau_2(q-1)}{\exp(c_8\sqrt{\log x})}\\
    &\ll \sum_{p\leq x}\tau_3(p-1)^2 \tau_2(p-1)^3 \sum_{q\leq x}\tau_3(q-1)^2\tau_2(q-1)^3 \exp(-c_8\sqrt{\log x})\\
    &\ll x^2\exp(-c_9 \sqrt{\log x}).
    \end{align*}
    The case when $\chi_2\chi_4$ is non-principal, is treated similarly and it also contributes $\ll x^2\exp(-c_9 \sqrt{\log x})$.

    Now, we find that the main contribution is when every character $\chi_1, \cdots, \chi_4$ is principal. In fact,
    \begin{align*}
    N^{-2}&\sum_{p\leq x}\sum_{\substack{{q\leq x}\\{q\neq p}}}\sum_{\substack{{w|p-1}\\{t|\frac{p-1}w}}}\sum_{\substack{{u|q-1}\\{s|\frac{q-1}u}}}\frac{\phi\left(\frac{p-1}w\right)}{p-1}\frac{\phi\left(\frac{p-1}{wt}\right)}{p-1}\frac{\phi\left(\frac{q-1}u\right)}{q-1}\frac{\phi\left(\frac{q-1}{us}\right)}{q-1}
    \left(N+O(1)+O\left(\frac Np\right)+O\left(\frac Nq\right)\right)^2\\
    &=N^{-2}\sum_{p\leq x}\sum_{\substack{{q\leq x}\\{q\neq p}}}\sum_{w|p-1}\sum_{u|q-1}\frac{\phi\left(\frac{p-1}w\right)}{w(p-1)}\frac{\phi\left(\frac{q-1}u\right)}{u(q-1)}\left(N^2+O(N)+O\left(\frac{N^2}p\right)+O\left(\frac{N^2}q\right)\right)\\
    &= \sum_{p\leq x}\sum_{\substack{{q\leq x}\\{q\neq p}}}\sum_{w|p-1}\sum_{u|q-1}\frac{\phi\left(\frac{p-1}w\right)}{w(p-1)}\frac{\phi\left(\frac{q-1}u\right)}{u(q-1)}+O\left(\frac{x^2}{N\log^2 x}\right)+O\left(\frac{x\log\log x}{\log x}\right)\\
    &=C^2\textrm{Li}^2(x) +O\left(\frac{x^2}{\log^E x}\right).
    \end{align*}
    Therefore,
    \begin{equation}
    N^{-2}\sum_{a\leq N}\sum_{b\leq N}\left(\sum_{\substack{{p\leq x}\\{p|a^n-b}\\{\textrm{for some $n$}}}}1-C\textrm{Li}(x)\right)^2= C^2\textrm{Li}^2(x) - C^2\textrm{Li}^2(x) + O\left(\frac{x^2}{\log^E x}\right).
    \end{equation}
    This completes the proof of Theorem 1.4.
    \end{proof}
    \subsection{Proof of Theorem 1.5}
    Following the definitions in ~\cite{LP},
    $$\Delta_q(n)=\# \{\textrm{cyclic factors } C_{q^v} \textrm{ in }(\Z/n\Z)^{*}: q^v || \ld(n) \},$$
    then
    \begin{equation}
    R(n)=\phi(n)\prod_{q|\phi(n)}\left(1-\frac1{q^{\Delta_q(n)}}\right).
    \end{equation}
    Let $\textrm{rad}(m)$ denote the largest square-free divisor of $m$. Let
    $$
    E(n)=\{a \in (\Z/n\Z)^{*} : a^{\frac{\ld(n)}{\textrm{rad}(n)}}\equiv 1 \textrm{ (mod $n$)}\},$$
    and we say that $\chi$ is elementary character if $\chi$ is trivial on $E(n)$. For each square free $h|\phi(n)$, let $\rho_n(h)$ be the number of elementary characters mod $n$ or order $h$. Then
    $$
    \rho_n(h) = \prod_{q|h}(q^{\Delta_q(n)} -1).$$
    For a character $\chi$ mod $n$, let
    $$
    c(\chi)=\frac1{\phi(n)} \sum_b^{ \ \ \ \ '} \chi(b), $$
    where the sum is over $\ld$-primitive roots in $[1,n]$. Then
    $$|c(\chi)|\leq \bar{c}(\chi),$$
    where
    $$\bar{c}(\chi)=\begin{cases} \frac1{\rho_n(\textrm{ord}(\chi))} &\mbox{ if $\chi$ is elementary,} \\ 0 &\mbox{ otherwise.} \end{cases}$$
    For the proof of above, see ~\cite[Proposition 2]{LP}.

    Let $X(n)$ be the set of non-principal elementary characters mod $n$. In ~\cite{L}, it is shown that
    \begin{equation}
    \sum_{a\leq y} N_a(x) = y\sum_{n\leq x}\frac{R(n)}n+B(x,y)+O(x\log x),
    \end{equation}
    where
    \begin{equation}
    B(x,y)=\sum_{n\leq x}\sum_{\chi\in X(n)}c(\chi)\sum_{a\leq y}\chi(a).
    \end{equation}
    Following the proof in ~\cite{LP},
    \begin{equation}
    |B(x,y)|\leq \sum_{d\leq x} |\mu(d)|S_d,
    \end{equation}
    where
    \begin{equation}
    S_d\ll \frac{xy}{d^2}\exp\left(3\sqrt{\frac{\log x}{\log\log x}}\right).
    \end{equation}
    We use this when $d$ is large.

    Let $\chi_{0,n}$ be the principal character modulo $n$. For positive integer $k$ and reals $w, z$, define
    $$
    F(k,z)=\sum_{\textrm{rad}(m)|k}\frac1m \sum_{\chi (\mathrm{ mod } \ k)}^{\ \ \ \ *} \bar{c}(\chi\chi_{0, km})\left|\sum_{a\leq z} \chi(a)\right|,$$
    $$
    T(w,z)=\sum_{k\leq w} F(k,z),$$
    $$
    S(w,z)=w\sum_{k\leq w} \frac1k F(k,z)=T(w,z)+w\int_1^w \frac1{u^2}T(u,z)du,$$
    and
    $$
    S_d\leq S\left(\frac xd, \frac yd\right).$$
    We want to estimate $S_d$ using the estimate of $T(w,z)$. Because of the integral in $S(w,z)$, we need an estimate of $T(u,z)$   for $1\leq u \leq w$. First, assume that $w\leq z^{\frac32}$. By applying P\'{o}lya-Vinogradov inequality, Li and Pomerance showed in ~\cite[Lemma 5]{LP} that \begin{equation}
    T(w,z)\ll w^{\frac32}\exp\left(3\sqrt{\frac{\log w}{\log\log w}}\right)\ll wz^{\frac34}\exp\left(3\sqrt{\frac{\log w}{\log\log w}}\right).
    \end{equation}
    Suppose now that $w> z^{\frac32}$. By H\"{o}lder inequality with $r=\lceil \frac{2\log w}{\log z} \rceil$,
    \begin{equation}
    T(w,z)^{2r}\leq A^{2r-1} B,
    \end{equation}
    where
    \begin{equation}
    A=\sum_{\substack{{k\leq w}\\{\textrm{rad}(m)|k}}}\frac1m\sum_{\chi (\mathrm{ mod } \ k)}^{\ \ \ \ *}\bar{c}(\chi\chi_{0, km})^{\frac{2r}{2r-1}},\end{equation}
    and
    \begin{equation}
    B=\sum_{\substack{{k\leq w}\\{\textrm{rad}(m)|k}}}\frac1m\sum_{\chi (\mathrm{ mod } \ k)}^{\ \ \ \ *}\left|\sum_{a\leq z}\chi(a)\right|^{2r}=\sum_{k\leq w}\frac{k}{\phi(k)}\sum_{\chi (\mathrm{ mod } \ k)}^{\ \ \ \ *}\left|\sum_{a\leq z}\chi(a)\right|^{2r}.
    \end{equation}
    Then by $0\leq \bar{c}(\chi\chi_{0, km})\leq 1$,
    \begin{equation}
    A\ll w\exp\left( 3\sqrt{\frac{\log w}{\log\log w}} \right).
    \end{equation}
    By large sieve inequality,
    \begin{equation}
    B \ll (w^2 + z^r)\sum_{a\leq z^r}(\tau_r'(a))^2
    \end{equation}
    If $w>z^{\frac32}$, then $w^{\frac1{2r}}>z^{\frac3{16}}$. By the method in Lemma 2.4, we have
    \begin{lemma}
    If $z>\exp(4.18\sqrt{\log w})$, then
    \begin{equation}
    T(w,z)\ll wz^{\frac{13}{16}}\exp\left(\sqrt{\log w} \left(f_1(4.18)+\frac{4.18}{4}+\epsilon\right)\right).
    \end{equation}
    \end{lemma}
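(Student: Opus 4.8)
The plan is to re-run the Hölder-plus-large-sieve argument that produced Lemma 2.4, but now in the regime $w>z^{3/2}$ and with the roles of $x,N$ played by $w,z$. Starting from the Hölder inequality $T(w,z)^{2r}\leq A^{2r-1}B$, I take $2r$-th roots to obtain $T(w,z)\leq A^{1-\frac1{2r}}B^{\frac1{2r}}$ and then estimate the two factors separately.

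For the $A$-factor I would use the bound $A\ll w\exp\!\big(3\sqrt{\log w/\log\log w}\big)$ together with the elementary observation that, since $r=\lceil 2\log w/\log z\rceil$ and $w>z^{3/2}$, one has $w^{\frac1{2r}}>z^{\frac3{16}}$. Writing $A\ll wE$ with $E=\exp\!\big(3\sqrt{\log w/\log\log w}\big)\geq1$ and using $1-\tfrac1{2r}>0$, I get $A^{1-\frac1{2r}}\ll (wE)^{1-\frac1{2r}}=wE\,(wE)^{-\frac1{2r}}\leq wE\,w^{-\frac1{2r}}<wE\,z^{-\frac3{16}}$; the key point is that the factor $w^{-1/(2r)}$ is converted into a genuine saving $z^{-3/16}$. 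For the $B$-factor I would first note that $r\geq 2\log w/\log z$ forces $z^{r}\geq w^{2}$, so $w^{2}+z^{r}\ll z^{r}$ and the large sieve bound becomes $B\ll z^{r}\sum_{a\leq z^{r}}(\tau_r'(a))^2$. Applying Corollary 2.3 with $N=z$ and $c=\tfrac2{K^2}$ (the hypothesis $r-1\leq c\log z$ holds because $r-1<\tfrac2K\sqrt{\log w}=c\log z$) gives $\sum_{a\leq z^{r}}(\tau_r'(a))^2\leq\big(\tfrac{(1+c)^{r-1}}{(r-1)!}z\log^{r-1}z\big)^{r}$, whence $B^{\frac1{2r}}\ll z\big(\tfrac{(1+c)^{r-1}}{(r-1)!}\log^{r-1}z\big)^{\frac12}$. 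Multiplying the two factors yields
\[
T(w,z)\ll w E\, z^{\frac{13}{16}}\Big(\tfrac{(1+c)^{r-1}}{(r-1)!}\log^{r-1}z\Big)^{\frac12}.
\]

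The final and most delicate step is the asymptotic bookkeeping, which I would carry out exactly as in Lemma 2.4 by setting $z=\exp(K\sqrt{\log w})$ with $K\geq 4.18$, so that $\log z=K\sqrt{\log w}$, $\log\log z=\log K+\tfrac12\log\log w$ and $r-1<\tfrac2K\sqrt{\log w}\leq r$. Stirling's formula turns the square-root factor into $\exp\!\big(\tfrac12(r-1)\log(1+c)-\tfrac12\log(r-1)!+\tfrac{r-1}2\log\log z\big)$. The main obstacle here is to verify that the two contributions of size $\tfrac{r-1}2\cdot\tfrac12\log\log w$—one coming from $\log(r-1)$ inside $-\tfrac12\log(r-1)!$ and one from $\log\log z$—cancel, leaving $(r-1)\log K-\tfrac{r-1}2\log2+\tfrac{r-1}2+o(\sqrt{\log w})$. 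Combining this with the explicit $z^{-3/16}$ saving reproduces precisely the exponent $f_1(K)+\tfrac K4$, and since $E=\exp(o(\sqrt{\log w}))$ the subexponential factor is absorbed into the $\epsilon$.

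To conclude uniformly for all $z>\exp(4.18\sqrt{\log w})$, I would set $g(K):=f_1(K)+\tfrac K4=\tfrac1K\big(\log(1+\tfrac2{K^2})-\log2+2\log K+1\big)$ and check by a short derivative computation that $g$ is decreasing for $K\geq 4.18$ (indeed $g(K)=h(K)/K$ with $h(K)=\log(1+\tfrac2{K^2})-\log2+2\log K+1$, and $h'(K)K-h(K)<0$ on this range). Hence the worst case occurs at the boundary $K=4.18$, so $g(K)\leq g(4.18)=f_1(4.18)+\tfrac{4.18}4$ for every admissible $z$, and the stated bound follows.
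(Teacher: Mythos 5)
Your proposal is correct and takes essentially the paper's own route: the paper's entire proof of this lemma is the remark ``by the method in Lemma 2.4,'' i.e.\ exactly your combination of the H\"older step $T(w,z)^{2r}\leq A^{2r-1}B$ with $r=\lceil 2\log w/\log z\rceil$, the bound $A\ll w\exp\bigl(3\sqrt{\log w/\log\log w}\bigr)$ converted via $w^{1/(2r)}>z^{3/16}$ into the $z^{-3/16}$ saving, the large sieve plus Corollary 2.3 with $N=z$, $c=2/K^2$ for the $B$-factor, the same Stirling bookkeeping yielding the exponent $f_1(K)+K/4$, and the monotonicity of $f_1(K)+\frac K4=\frac1K\bigl(\log\bigl(\frac{K^2}2+1\bigr)+1\bigr)$ (which the paper records in Section 3) to reduce to the boundary value $K=4.18$. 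The only point worth adding is that your argument is stated for the regime $w>z^{3/2}$, while the lemma has no such restriction; in the complementary case $w\leq z^{3/2}$ the P\'olya--Vinogradov bound (45) already gives the stronger $T(w,z)\ll wz^{3/4}\exp\bigl(3\sqrt{\log w/\log\log w}\bigr)$, which is absorbed by the stated estimate since $f_1(4.18)+\frac{4.18}4>0$, so the lemma holds there trivially.
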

    \begin{lemma}
    If $\exp(3.419906\sqrt{\log w})<z\leq\exp(16\sqrt{\log w})$, then
    \begin{equation}
    T(w,z)\ll wz^{\frac34}\exp\left(\sqrt{\log w}\left(f_1(3.419906)+\frac{3.419906}4+\epsilon\right)\right).
    \end{equation}
    \end{lemma}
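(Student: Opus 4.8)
The plan is to run the method of Lemma 2.4 (the estimate of $S_4$) essentially verbatim, but applied to $T(w,z)$, with $z$ playing the role of $N$ and with the exact relation between $r$, $w$, $z$ retained rather than the crude bound $w^{1/(2r)}>z^{3/16}$ used in Lemma 2.8. Since the hypothesis forces $K:=\log z/\sqrt{\log w}$ to lie in the bounded interval $(3.419906,16]$, for all large $w$ we have $\log w>\tfrac94 K^2$, hence $w>z^{3/2}$, and the H\"older--large-sieve branch $T(w,z)^{2r}\leq A^{2r-1}B$ with $r=\lceil 2\log w/\log z\rceil$ is available. I would start from this inequality together with the bounds $A\ll w\exp(3\sqrt{\log w/\log\log w})$ and $B\ll (w^2+z^r)\sum_{a\leq z^r}(\tau_r'(a))^2$ already recorded in the excerpt.

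First I would simplify $B$. Because $r=\lceil 2\log w/\log z\rceil$ gives $z^r\geq\exp(2\log w)=w^2$, we have $w^2+z^r\ll z^r$, so $B\ll z^r\sum_{a\leq z^r}(\tau_r'(a))^2$. Applying Corollary 2.3 with $N=z$ and $c=2/K^2$ — legitimate since $r-1<\tfrac2K\sqrt{\log w}=c\log z$ — bounds the inner sum by $\big(\tfrac{(1+c)^{r-1}}{(r-1)!}z\log^{r-1}z\big)^r$. Substituting into $T^{2r}\leq A^{2r-1}B$ and extracting the $2r$-th root yields
\[
T(w,z)\ll w^{1-\frac1{2r}}z\,\exp\!\Big(\big(1-\tfrac1{2r}\big)3\sqrt{\tfrac{\log w}{\log\log w}}\Big)\Big(\tfrac{(1+c)^{r-1}}{(r-1)!}\log^{r-1}z\Big)^{1/2}.
\]

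The crux is the factor $w^{1-1/(2r)}z=w\cdot z\cdot w^{-1/(2r)}$. Here I would \emph{not} pass to the crude $w^{-1/(2r)}<z^{-3/16}$ (which only gives $z^{13/16}$); instead, parametrizing $z=\exp(K\sqrt{\log w})$ and using $r=\tfrac2K\sqrt{\log w}+O(1)$ I would compute $\tfrac{\log w}{2r}=\tfrac K4\sqrt{\log w}+O(K^2)=\tfrac14\log z+O(K^2)$, so that $\log z-\tfrac{\log w}{2r}=\tfrac34\log z+O(K^2)$, producing the sharper exponent $z^{3/4}$. Since $K\leq 16$ the error $O(K^2)$ is $O(1)=o(\sqrt{\log w})$, hence harmless. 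It then remains to evaluate the Stirling contribution: writing $\log(r-1)!=(r-1)\log(r-1)-(r-1)+O(\log r)$ and collecting, the two $\log\log z$-terms arising from $-\tfrac12\log(r-1)!$ and $\tfrac{r-1}2\log\log z$ cancel exactly, and the surviving coefficient of $\sqrt{\log w}$ is precisely $\tfrac1K\log(1+\tfrac2{K^2})-\tfrac{\log 2}K+\tfrac1K+\tfrac{2\log K}K=f_1(K)+\tfrac K4$.

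Finally I would invoke monotonicity: $g(K):=f_1(K)+\tfrac K4$ is decreasing on $[3.419906,16]$ (each of $\tfrac1K\log(1+\tfrac2{K^2})$, $\tfrac{1-\log 2}K$, and $\tfrac{2\log K}K$ decreases there), so $g(K)\leq g(3.419906)=f_1(3.419906)+\tfrac{3.419906}4$ throughout the range. Folding the subsidiary errors — the $3\sqrt{\log w/\log\log w}$ from $A$, the $O(\log r)$ Stirling remainder, and the $O(K^2)$ above, all $o(\sqrt{\log w})$ — into a single $\epsilon\sqrt{\log w}$ gives the stated bound. The main obstacle is the bookkeeping in the Stirling step and verifying the exact cancellation of the $\log\log z$-terms; the one genuinely new point relative to Lemma 2.8 is that keeping $\tfrac{\log w}{2r}$ exactly (rather than bounding it crudely) upgrades the exponent from $\tfrac{13}{16}$ to $\tfrac34$, and this is exactly what forces the restriction $z\leq\exp(16\sqrt{\log w})$ needed to keep $O(K^2)$ negligible.
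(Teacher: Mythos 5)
Your proposal is correct and is essentially the paper's own argument: the paper justifies this lemma only with the remark that it follows ``by the method in Lemma 2.4,'' and your write-up executes exactly that method, correctly keeping $\frac{\log w}{2r}=\frac K4\sqrt{\log w}+O(K^2)$ with $O(K^2)=O(1)$ thanks to the hypothesis $z\leq\exp(16\sqrt{\log w})$ (which is what upgrades the exponent from $\frac{13}{16}$ to $\frac34$), and then using the monotonicity of $f_1(K)+\frac K4=\frac1K\left(\log\left(\frac{K^2}2+1\right)+1\right)$ --- a fact the paper itself records in Section 3 --- to reduce to the endpoint $K=3.419906$. All the verifiable details (the Hölder exponent $r=\lceil 2\log w/\log z\rceil$, the applicability of Corollary 2.3 with $c=2/K^2$, the absorption of $w^2+z^r\ll z^r$, and the Stirling cancellation of the $\frac12\log\log w$ terms) check out, so there is no gap.
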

    Therefore, by $S(w,z)=T(w,z)+w\int_1^w \frac1{u^2}T(u,z)du$,
    \begin{lemma}
    If $z>\exp(4.18\sqrt{\log w})$, then
    \begin{equation}
    S(w,z)\ll wz^{\frac{13}{16}}\exp\left(\sqrt{\log w} \left(f_1(4.18)+\frac{4.18}{4}+2\epsilon\right)\right).
    \end{equation}
    \end{lemma}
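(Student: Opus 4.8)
The plan is to feed the two available estimates for $T(\cdot,z)$ into the identity
$$
S(w,z)=T(w,z)+w\int_1^w \frac{1}{u^2}T(u,z)\,du
$$
and to check that neither piece exceeds the target $wz^{13/16}\exp\left(\sqrt{\log w}\left(f_1(4.18)+\frac{4.18}{4}+2\epsilon\right)\right)$. The summand $T(w,z)$ is disposed of immediately: the estimate valid for $z>\exp(4.18\sqrt{\log w})$ gives precisely $wz^{13/16}\exp\left(\sqrt{\log w}\left(f_1(4.18)+\frac{4.18}{4}+\epsilon\right)\right)$. Hence all the work lies in the integral, and the crucial preliminary observation is that for every $u$ with $1\le u\le w$ we have $\sqrt{\log u}\le\sqrt{\log w}$, so that $z>\exp(4.18\sqrt{\log w})\ge\exp(4.18\sqrt{\log u})$. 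Thus the hypothesis of the $T$-estimate holds uniformly throughout the integration interval.

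Next I would bound $T(u,z)$ for each $u\le w$. When $u>z^{3/2}$ the estimate for $z>\exp(4.18\sqrt{\log u})$ applies directly and yields $T(u,z)\ll uz^{13/16}\exp\left(\sqrt{\log u}\left(f_1(4.18)+\frac{4.18}{4}+\epsilon\right)\right)$; when $u\le z^{3/2}$ the P\'olya--Vinogradov bound gives $T(u,z)\ll u^{3/2}\exp(3\sqrt{\log u/\log\log u})\ll uz^{3/4}\exp(3\sqrt{\log u/\log\log u})$, and since $z^{3/4}\le z^{13/16}$ and $3\sqrt{\log u/\log\log u}=o(\sqrt{\log u})$, this is dominated by the same right-hand side (using $f_1(4.18)+\frac{4.18}{4}>0$). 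Therefore $T(u,z)\ll uz^{13/16}\exp(c\sqrt{\log u})$ holds on the whole interval with $c=f_1(4.18)+\frac{4.18}{4}+\epsilon$. Substituting this and factoring out $z^{13/16}$ reduces the problem to estimating $\int_1^w u^{-1}\exp(c\sqrt{\log u})\,du$.

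The final step is this one-variable integral. With $v=\sqrt{\log u}$ it becomes $2\int_0^{\sqrt{\log w}} v e^{cv}\,dv$, and a numerical check shows $c=f_1(4.18)+\frac{4.18}{4}+\epsilon\approx 0.78+\epsilon>0$ (here $f_1(4.18)\approx-0.26$ while $\frac{4.18}{4}=1.045$). Because $c>0$ the integrand is increasing, the integral is governed by its upper endpoint, and integration by parts gives $2\int_0^{\sqrt{\log w}} v e^{cv}\,dv\le\frac{2\sqrt{\log w}}{c}\exp(c\sqrt{\log w})$. Consequently
$$
w\int_1^w\frac{1}{u^2}T(u,z)\,du\ll wz^{13/16}\,\frac{2\sqrt{\log w}}{c}\exp\left(\sqrt{\log w}\left(f_1(4.18)+\frac{4.18}{4}+\epsilon\right)\right),
$$
and the spurious factor $\frac{2\sqrt{\log w}}{c}$, being $\ll\exp(\epsilon\sqrt{\log w})$, is absorbed by enlarging $\epsilon$ to $2\epsilon$. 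Combining this with the $T(w,z)$ term finishes the proof.

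The step I expect to be the real obstacle is exactly the positivity of $c$: were $f_1(4.18)+\frac{4.18}{4}$ negative, the integral would be controlled by its lower endpoint and the conclusion would take a completely different form, so the whole argument hinges on verifying this sign and then confirming that the only price of upper-endpoint domination is the harmless factor $\sqrt{\log w}$, which is what forces the weakening $\epsilon\mapsto 2\epsilon$ in the statement. A secondary point requiring care is the uniform applicability of the $T$-estimate across all of $[1,w]$, including the small-$u$ (P\'olya--Vinogradov) range and the negligible contribution of bounded $u$ near the lower limit, but this is routine once the domination recorded above is in hand.
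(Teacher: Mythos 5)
Your proof is correct and takes essentially the same route as the paper, which obtains this lemma directly from Lemma 2.6 via the identity $S(w,z)=T(w,z)+w\int_1^w \frac{1}{u^2}T(u,z)\,du$, applying the $T$-bound uniformly for $u\le w$ (the hypothesis $z>\exp(4.18\sqrt{\log u})$ being inherited from $u\le w$) and absorbing the extra factor $\sqrt{\log w}$ from the integral by enlarging $\epsilon$ to $2\epsilon$. You merely make explicit what the paper leaves implicit, namely the substitution $v=\sqrt{\log u}$ with integration by parts and the sign check $f_1(4.18)+\frac{4.18}{4}\approx 0.78>0$, both of which are correct.
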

    \begin{lemma}
    If $\exp(3.419906\sqrt{\log w})<z\leq\exp(16\sqrt{\log w})$, then
    \begin{equation}
    S(w,z)\ll wz^{\frac34}\exp\left(\sqrt{\log w}\left(f_1(3.419906)+\frac{3.419906}4+2\epsilon\right)\right)+w\log w \cdot z^\frac78.
    \end{equation}
    \end{lemma}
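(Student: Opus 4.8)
The starting point is the identity $S(w,z)=T(w,z)+w\int_1^w u^{-2}T(u,z)\,du$ recorded above, together with the three estimates for $T$ at our disposal: the P\'{o}lya--Vinogradov bound $T(u,z)\ll uz^{3/4}\exp(3\sqrt{\log u/\log\log u})$ valid for $u\le z^{3/2}$, the estimate carrying $z^{13/16}$ valid once $z>\exp(4.18\sqrt{\log u})$, and the estimate carrying $z^{3/4}$ valid for $\exp(3.419906\sqrt{\log u})<z\le\exp(16\sqrt{\log u})$. Write $z=\exp(K\sqrt{\log w})$ with $3.419906<K\le 16$, and for $1\le u\le w$ set $K_u=\log z/\sqrt{\log u}$, the effective constant at height $u$; as $u$ runs from $1$ to $w$, $K_u$ decreases from $+\infty$ down to $K$. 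The outer term $T(w,z)$ is already in the range $\exp(3.419906\sqrt{\log w})<z\le\exp(16\sqrt{\log w})$, so the $z^{3/4}$ estimate gives $T(w,z)\ll wz^{3/4}\exp(\sqrt{\log w}(f_1(3.419906)+\tfrac{3.419906}{4}+\epsilon))$, which is exactly the first term on the right. All remaining work lies in the integral.

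The plan is to split $\int_1^w$ at the two thresholds $u=z^{3/2}$ and $u=u_{16}:=\exp((\log z/16)^2)$, the point at which $K_u=16$. For all large $w$ one verifies $z^{3/2}<u_{16}<w$, so the ranges $[1,z^{3/2}]$, $[z^{3/2},u_{16}]$, $[u_{16},w]$ are genuine. On $[1,z^{3/2}]$ I apply P\'{o}lya--Vinogradov. On $[z^{3/2},u_{16}]$ one has $K_u\ge 16$, hence $z>\exp(4.18\sqrt{\log u})$, so the $z^{13/16}$ estimate applies. On $[u_{16},w]$ one has $3.419906<K_u\le 16$, so the $z^{3/4}$ estimate applies. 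The $z^{3/4}$ estimate is unavailable once $K_u>16$ (its proof needs $z^{1/4r}$ to stay subexponential), which is precisely why the middle range must be handled by the $z^{13/16}$ estimate.

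Each piece is evaluated by the substitution $t=\sqrt{\log u}$, under which $u^{-1}\,du=2t\,dt$, so a model integrand $u^{-1}z^{\gamma}\exp(\beta\sqrt{\log u})$ becomes $z^{\gamma}\cdot 2te^{\beta t}$; as $\beta>0$ this is increasing in $t$, and each integral is dominated up to a factor $\tfrac{2}{\beta}\sqrt{\log b}$ by its value at the upper endpoint $b$. On $[u_{16},w]$ the upper endpoint is $u=w$, producing $\ll wz^{3/4}\sqrt{\log w}\exp(\sqrt{\log w}(f_1(3.419906)+\tfrac{3.419906}{4}+\epsilon))$, which folds into the first term once $\epsilon$ is replaced by $2\epsilon$, since $\sqrt{\log w}=\exp(o(\sqrt{\log w}))$. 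The P\'{o}lya--Vinogradov range yields only $\ll wz^{3/4}\exp(o(\sqrt{\log w}))$ and is absorbed the same way. The decisive range is $[z^{3/2},u_{16}]$: its upper endpoint $u_{16}$ satisfies $\sqrt{\log u_{16}}=\tfrac{K}{16}\sqrt{\log w}$, so $\exp((f_1(4.18)+\tfrac{4.18}{4}+\epsilon)\sqrt{\log u_{16}})=z^{(f_1(4.18)+4.18/4+\epsilon)/16}$, and the contribution is $\ll wz^{(13+f_1(4.18)+4.18/4+\epsilon)/16}\sqrt{\log w}$.

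The one numerical input is the inequality $f_1(4.18)+\tfrac{4.18}{4}<1$ (its value is about $0.784$): this forces the exponent $(13+f_1(4.18)+\tfrac{4.18}{4}+\epsilon)/16<\tfrac{14}{16}=\tfrac78$ for small $\epsilon$, so the middle range is $\ll wz^{7/8}\sqrt{\log w}\ll w\log w\cdot z^{7/8}$, which is the second term. Assembling the four contributions gives the claimed bound. I expect the main obstacle to be the bookkeeping around the split point $u_{16}$: one must confirm that $u_{16}$ is forced by the validity window $K_u\le 16$ of the $z^{3/4}$ estimate, that $z^{3/2}<u_{16}<w$ holds for large $w$, and that the middle integral is governed by its top endpoint rather than being spread out --- all of which reduce to the monotonicity of $2te^{\beta t}$ and the single inequality $f_1(4.18)+\tfrac{4.18}{4}<1$.
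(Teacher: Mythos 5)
Your proof is correct and takes essentially the same route the paper intends: the paper presents this lemma as a direct consequence of the identity $S(w,z)=T(w,z)+w\int_1^w u^{-2}T(u,z)\,du$ combined with the P\'olya--Vinogradov bound for $T$, Lemma 2.6, and Lemma 2.7, and your splitting of the integral at $u=z^{3/2}$ and at $u_{16}=\exp\left((\log z/16)^2\right)$ is exactly the implicit argument, with the validity windows of the three $T$-estimates checked correctly. In particular, you correctly identify the origin of the term $w\log w\cdot z^{7/8}$: on the middle range the $z^{13/16}$ bound evaluated at $u_{16}$ gives exponent $\frac{13}{16}+\frac{1}{16}\left(f_1(4.18)+\frac{4.18}{4}+\epsilon\right)<\frac{7}{8}$, using the numerical fact $f_1(4.18)+\frac{4.18}{4}<1$.
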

    Suppose that $y>\exp(4.2\sqrt{\log x})$. If $\frac yd>\exp\left(4.18\sqrt{\log \frac xd}\right)$, then by Lemma 2.8,
    \begin{align*}
    S\left(\frac xd, \frac yd\right)&\ll \frac xd \left(\frac yd\right)^{\frac{13}{16}}\exp\left(\sqrt{\log \frac xd} \left(f_1(4.18)+\frac{4.18}{4}+2\epsilon\right)\right)
    \ll \frac{xy}{d^{\frac{29}{16}}y^{\frac{3}{16}}}  \exp\left(\sqrt{\log x} \left(f_1(4.18)+\frac{4.18}{4}+2\epsilon\right)\right)\\
    &\ll \frac{xy}{d^{\frac{29}{16}}} \exp\left(\sqrt{\log x} \left(-\frac{3}{16}\cdot 4.2 +f_1(4.18)+\frac{4.18}{4}+2\epsilon\right)\right)
    \end{align*}
    Since $-\frac{3}{16}\cdot 4.2 +f_1(4.18)+\frac{4.18}{4}<0$, the contribution of these $d$'s is
    $$
    \ll xy \exp\left(-c_{11}\sqrt{\log x}\right)$$

    If $\frac yd\leq \exp\left(4.18\sqrt{\log \frac xd}\right)$, then clearly $d\geq \exp\left(0.02\sqrt{\log x}\right)$. Then by (50), the contribution of these $d$'s is
    $$
    \ll xy\exp\left(-c_{12}\sqrt{\log x}\right).$$
    Therefore, we obtain the following weaker version of Theorem 1.5:
    \begin{theorem}
    If $y>\exp(4.2\sqrt{\log x})$, then
    \begin{equation}
    \frac1y \sum_{a\leq y} N_a(x) = \sum_{n\leq x} \frac{R(n)}n+ O\left(x\exp\left(-c_{13}\sqrt{\log x}\right)\right).
    \end{equation}
    \end{theorem}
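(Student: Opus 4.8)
The plan is to turn the exact formula $\sum_{a\le y}N_a(x)=y\sum_{n\le x}R(n)/n+B(x,y)+O(x\log x)$ into the stated asymptotic by dividing by $y$ and showing that the two remaining terms are each $O(x\exp(-c\sqrt{\log x}))$. The term $O(x\log x)/y$ is immediate: since $y>\exp(4.2\sqrt{\log x})$ it is $O(x\log x\cdot\exp(-4.2\sqrt{\log x}))$, which is absorbed into the desired error for any $c<4.2$. Everything therefore reduces to bounding $B(x,y)/y$, and for this I would use the majorization $|B(x,y)|\le\sum_{d\le x}|\mu(d)|S_d$ together with $S_d\le S(x/d,y/d)$, splitting the sum over $d$ according to whether $y/d>\exp(4.18\sqrt{\log(x/d)})$ (so that Lemma 2.8 applies) or not.

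On the range $y/d>\exp(4.18\sqrt{\log(x/d)})$ I would apply Lemma 2.8 to $S(x/d,y/d)$, giving a bound $\ll (x/d)(y/d)^{13/16}\exp(\sqrt{\log(x/d)}(f_1(4.18)+4.18/4+2\epsilon))$. Writing $(x/d)(y/d)^{13/16}=xy\,d^{-29/16}y^{-3/16}$ and using $y>\exp(4.2\sqrt{\log x})$ to bound $y^{-3/16}\le\exp(-\frac{3}{16}\cdot4.2\sqrt{\log x})$, and $\log(x/d)\le\log x$ in the exponential (valid because the coefficient $f_1(4.18)+4.18/4$ is positive), this becomes $\ll xy\,d^{-29/16}\exp(\sqrt{\log x}(-\frac{3}{16}\cdot4.2+f_1(4.18)+4.18/4+2\epsilon))$. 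Since $-\frac{3}{16}\cdot4.2+f_1(4.18)+4.18/4<0$ and $\sum_d d^{-29/16}$ converges, the total contribution of this range is $\ll xy\exp(-c_{11}\sqrt{\log x})$.

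On the complementary range $y/d\le\exp(4.18\sqrt{\log(x/d)})$, Lemma 2.8 is unavailable, but the hypothesis $y>\exp(4.2\sqrt{\log x})$ forces $\log d\ge(4.2-4.18)\sqrt{\log x}$, i.e.\ $d\ge\exp(0.02\sqrt{\log x})$. Here I would fall back on the crude bound (50), $S_d\ll (xy/d^2)\exp(3\sqrt{\log x/\log\log x})$, and sum the tail $\sum_{d\ge\exp(0.02\sqrt{\log x})}d^{-2}\ll\exp(-0.02\sqrt{\log x})$. Because $3\sqrt{\log x/\log\log x}=o(\sqrt{\log x})$, the resulting contribution is $\ll xy\exp(-c_{12}\sqrt{\log x})$. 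Combining the two ranges, dividing through by $y$, and taking $c_{13}=\min(c,c_{11},c_{12})$ yields the theorem.

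I expect the complementary range to be the main obstacle. Lemma 2.8 is the sharp estimate, but it is only licensed once $y/d$ is large relative to $x/d$, a condition that must fail once $d$ grows; the argument succeeds precisely because this failure cannot occur before $d$ is already exponentially large, at which point the weaker bound (50) with its decisive $d^{-2}$ decay takes over and the $\exp(3\sqrt{\log x/\log\log x})$ loss is harmless. A secondary point to verify is that the estimate for $S(x/d,y/d)$ is genuinely uniform in $d$: the integral defining $S(w,z)$ demands control of $T(u,z)$ for all $1\le u\le w$, which is why the two regimes of the $T$-estimates (Lemmas 2.6 and 2.7) must both feed into Lemma 2.8.
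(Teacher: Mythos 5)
Your proposal is correct and follows the paper's proof essentially step for step: the same split of $\sum_{d\le x}|\mu(d)|S_d$ at the threshold $y/d>\exp\left(4.18\sqrt{\log(x/d)}\right)$, with Lemma 2.8 and the bound $y^{-3/16}\le\exp\left(-\tfrac{3}{16}\cdot 4.2\sqrt{\log x}\right)$ on the first range, and the crude bound (50) together with the forced lower bound $d\ge\exp\left(0.02\sqrt{\log x}\right)$ on the complementary range. Your explicit treatment of the $O(x\log x)/y$ term and the sign check on $f_1(4.18)+4.18/4$ (needed to replace $\sqrt{\log(x/d)}$ by $\sqrt{\log x}$) only spell out details the paper leaves implicit.
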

    Now, assume that $\exp(3.42\sqrt{\log x})<y\leq \exp(16\sqrt{\log x})$, then for large $x$ and for $\frac yd \geq \exp\left(3.419907\sqrt{\log x}\right)$,
    $$
    \exp\left(16\sqrt{\log{\frac xd}}\right) \geq \frac{\exp\left(16\sqrt{\log x}\right)}d\geq \frac yd \geq \exp\left(3.419907\sqrt{\log x}\right)> \exp\left(3.419906\sqrt{\log \frac xd}\right).
    $$
    Thus by Lemma 2.9,
    \begin{equation}
    S\left(\frac xd, \frac yd\right)\ll \frac xd \left(\frac yd\right)^{\frac34}\exp\left(\sqrt{\log x}\left(f_1(3.419906)+\frac{3.419906}4+2\epsilon\right)\right)+\frac xd \log x \cdot \left(\frac yd\right)^{\frac78}.
    \end{equation}
    We sum this for $\frac yd \geq \exp\left(3.419907\sqrt{\log x}\right)$. By $-\frac{3.419907}4+f_1(3.419906)+\frac{3.419906}4<0$, the contribution is
    \begin{align*}
    &\ll xy \exp\left(\sqrt{\log x}\left(-\frac{3.419907}4+f_1(3.419906)+\frac{3.419906}4+2\epsilon   \right) \right) +\frac{xy\log x}{y^{\frac18}}\\
    &\ll xy \exp\left(-c_{14}\sqrt{\log x}\right).
    \end{align*}
    If $\frac yd < \exp\left(3.419907\sqrt{\log x}\right)$, then $d > \exp\left(0.000093\sqrt{\log x}\right)$. By (50), the contribution of these $d$'s is
    $$\ll xy \exp\left(-c_{15}\sqrt{\log x}\right).$$
    This completes the proof of Theorem 1.5.
    \section{Some Numerical Data and Calculus Remarks}
    The numerical values $3.419906$, $3.419907$, and $3.42$ are positive numbers greater than the unique solution to the equation $f_1(K)=0$. The values of $f_1(K)$ for these numbers are negative. Thus, $3.42$ in Theorem 1.1, 1.3, 1.5 can be replaced by any positive number greater than the unique solution to $f_1(K)=0$, which is numerically  $3.41990570065660\cdots$. Similarly, $4.2$ can be replaced by any positive number greater than the unique solution to $-\frac 3{16}K + f_1(K) + \frac K4 = 0$, which is numerically $4.17980309602625\cdots$. Also, $4.8365$ in Theorem 1.2, 1.4 can be replaced by any positive number greater than $4.83647702390563\cdots$.  The function $f_1(K)+\frac K4$ which can be simplified as $\frac1K\left(\log\left(\frac{K^2}2+1\right) +1\right)$, is a decreasing function for $K>0$ and it converges to $0$ as $K\rightarrow\infty$. The author used Wolfram Alpha for numerical calculations.
    \section{Further Developments}
    The author thinks that the corresponding normal order result for Theorem 1.5 could probably be done such as:
    If $y>\exp(4.8365\sqrt{\log x})$, then
    \begin{equation}
    \frac1y\sum_{a\leq y}\left(N_a(x) - \sum_{n\leq x}\frac{R(n)}n\right)^2 \ll x^2\exp\left(-c_{16}\sqrt{\log x}\right).
    \end{equation}
    The author also thinks that, in Theorem 1.1, 1.2, 1.3 and 1.4, if we replace $C\textrm{Li}(x)$ by $\sum_{p\leq x}\sum_{w|p-1}\frac{\phi\left(\frac{p-1}w\right)}{w(p-1)}$, then $(\log x)^D$, $(\log x)^E$ in the error terms may be replaced by $\exp\left(c_{17}\sqrt{\log x}\right)$.
           \flushleft


\begin{thebibliography}{99}
    \bibitem[B]{B} O. Bordell\.{e}s, \emph{Explicit Upper Bounds for the Average Order of $d_n(m)$ and Application to Class Number}, Journal of Inequalities in Pure and Applied Mathematics, Volume 3, Issue 3, Article 38, 2002
    \bibitem[C]{C} R. D. Carmichael, \emph{The Theory of Numbers}, Wiley (New York, 1914).
    \bibitem[H]{H} C. Hooley, \emph{On Artin's Conjecture}, Journal f$\ddot{u}$r die Reine und Angewandte Mathematik, Volume 225, (1967) pp. 209-220.
    \bibitem[L]{L} S. Li, \emph{An Improvement of Artin's Conjecture on Average for Composite Moduli}, Mathematika 51 (2004), pp 97-109.
    \bibitem[LP]{LP} S. Li, C. Pomerance, \emph{The Artin-Carmichael Primitive Root Problem on Average}, Mathematika 55(2009), pp 13-28.
    \bibitem[S0]{S0} P. J. Stephens, \emph{An Average Result for Artin's Conjecture}, Mathematika, Volume 16, Issue 2, December 1969, p. 178-188.
    \bibitem[S]{S} P. J. Stephens, \emph{Prime Divisors of Second Order Linear Recurrences II}, Journal of Number Theory, Volume 8, Issue 3, August 1976.


 \end{thebibliography}
\end{document}